\newtheorem{theorem}{Theorem}
\newtheorem{lemma}[theorem]{Lemma}
\newtheorem{corollary}[theorem]{Corollary}
\newtheorem{remark}[theorem]{Remark}
\newtheorem{definition}[theorem]{Definition}
\numberwithin{equation}{section}
\numberwithin{theorem}{section}
\newcommand{\dif}{\mathrm{d}}
\newcommand{\D}{\mathbb{D}}
\newcommand{\C}{\mathbb{C}}
\newcommand{\Z}{\mathbb{Z}}
\newcommand{\E}{\mathbb E}
\newcommand{\N}{\mathbb N}
\newcommand{\Exp}{\mathbb{E}}
\newcommand{\filt}{\mathscr{F}}
\newcommand{\Gfilt}{\mathscr{G}}
\DeclareDocumentCommand \one { o }
{%
  \IfNoValueTF {#1}
  {\mathbf{1}  }
  {\mathbf{1}\left\{ {#1} \right\} }%
}
\DeclareDocumentCommand{\Prto} {o O{\infty} } {
  \IfNoValueTF {#1}
  {\overset{p}{\longrightarrow}}
  { \xrightarrow[ #1 \to #2]{p }}
}
\DeclareDocumentCommand{\Asto} {o} {
  \IfNoValueTF {#1}
  {\overset{\operatorname{a.s.}}{\longrightarrow}}
  {
    \xrightarrow[ #1 \to \infty]{\operatorname{a.s.} }
  }
}
\DeclareDocumentCommand{\Mgfto} {o} {
  \IfNoValueTF {#1}
  {\overset{\operatorname{mgf}}{\longrightarrow}}
  { \xrightarrow[ #1 \to \infty]{\operatorname{mgf} }}
}
\DeclareDocumentCommand{\Wkto} {o} {
  \IfNoValueTF {#1}
  {\overset{d}{\longrightarrow}}
  { \xrightarrow[ #1 \to \infty]{d }}
}
\newcommand{\mtp}{\mathcal{M}_{\theta}(|p^2|) }
\begin{document}
\begin{frontmatter}

\title{The Fourier coefficients of the holomorphic multiplicative chaos in the limit of large frequency}
\runtitle{HMC in the $L^{1}$-phase}
\runauthor{Najnudel, Paquette, Simm and Vu}
\begin{aug}
  \author[A]{
    \fnms{Joseph}
    \snm{Najnudel}
    \ead[label=e1,mark]{joseph.najnudel@bristol.ac.uk}}
  \author[B]{
    \fnms{Elliot}
    \snm{Paquette}
    \ead[label=e2,mark]{elliot.paquette@mcgill.ca}}
  \author[C]{
    \fnms{Nick}
    \snm{Simm}
    \ead[label=e3,mark]{n.j.simm@sussex.ac.uk}}
  \author[D]{
      \fnms{Truong}
      \snm{Vu}
      \ead[label=e4,mark]{tvu25@uic.edu}}

 \address[A]{School of Mathematics; University of Bristol, BS8 1UG, United Kingdom; \printead{e1}}
 \address[B]{Department of Mathematics and Statistics; McGill University, Montreal, Quebec, Canada; \printead{e2}}
 \address[C]{Department of Mathematics; University of Sussex, BN1 9RH, United Kingdom; \printead{e3}}
 \address[D]{Department of Mathematics, Statistics, and Computer Science, University of Illinois at Chicago, Chicago, Illinois, USA; \printead{e4}}
\end{aug}

\begin{abstract}
  The holomorphic multiplicative chaos (HMC) is a holomorphic analogue of the Gaussian multiplicative chaos.  It arises naturally as the limit in large matrix size of the characteristic polynomial of Haar unitary, and more generally circular-$\beta$-ensemble, random matrices.
  We consider the Fourier coefficients of the holomorphic multiplicative chaos in the $L^1$-phase, and we show that appropriately normalized, this converges in distribution to a complex normal random variable, scaled by the total mass of the Gaussian multiplicative chaos measure on the unit circle.  We further generalize this to a process convergence, showing the joint convergence of consecutive Fourier coefficients.  As a corollary, we derive convergence in law of the secular coefficients of sublinear index of the circular-$\beta$-ensemble for all $\beta > 2$.
\\\\\\\\
\end{abstract}

\begin{keyword}[class=MSC2020]
\kwd[Primary ]{60B20}
\kwd{60F05}
\kwd[; secondary ]{60F05}
\end{keyword}

\begin{keyword}
\kwd{random matrix}
\kwd{Circular beta ensemble}
\kwd{Circular unitary ensemble}
\kwd{magic squares}
\kwd{secular coefficient}
\kwd{characteristic polynomial}
\kwd{Gaussian multiplicative chaos}
\kwd{Ewens sampling formula}
\kwd{martingale central limit theorem}
\end{keyword}

\end{frontmatter}

\section{Introduction}

In this paper we consider the Holomorphic multiplicative chaos. This is a distribution on the unit circle, arising in the theory of random matrices and analytic number theory. See previous work of the first three authors \cite{najnudel2023secular} where it was introduced and studied. To define it, let $\{\mathcal{N}_{k}\}_{k=1}^{\infty}$ be i.i.d.\ \emph{standard complex normal} random variables, i.e.\ having moments
\begin{equation}
  \Exp(\mathcal{N}_k) = 0,
  \quad
  \Exp (\mathcal{N}_k^{2}) = 0,
  \quad
  \text{and}
  \quad
  \Exp(|\mathcal{N}_k|^2) = 1. \label{iidnk}
\end{equation}
Let $G^{\C}$ be the Gaussian analytic function on the unit disc $\D$, 
\begin{equation}
  G^{\C}(z) = \sum_{k=1}^\infty \frac{z^k}{\sqrt{k}}\mathcal{N}_k.
  \label{eq:GAF}
\end{equation}

For trigonometric polynomials $\phi$ if we let $z \mapsto \phi(z)$ for $z \in \D$ be the continuous harmonic extension to $\D,$ we then define the random distribution
\begin{equation}
  ( \mathrm{HMC}_\theta, \phi)
  \coloneqq
  \lim_{r \to 1} 
  \frac{1}{2\pi}
  \int_0^{2\pi} 
  e^{\sqrt{\theta} G^{\C}(re^{i\vartheta})}
  \overline{\phi(r\vartheta)} \dif\vartheta,
  \label{eq:hmc}
\end{equation}
which is the \emph{holomorphic multiplicative chaos}
in terms of the inverse temperature parameter
\begin{equation}
\theta \coloneqq \frac{2}{\beta}.
\end{equation}

We define for $n \in \N_0$ the Fourier coefficient of the HMC
\begin{equation}\label{eq:cn}
  c_n \coloneqq ( \mathrm{HMC}_\theta, \vartheta \mapsto  e^{i n\vartheta}).
\end{equation}
Equivalently, the coefficients $c_{n}$ can be extracted from a generating function by the formula
\begin{equation}
c_{n} = [z^{n}]\,e^{\sqrt{\theta}G^{\mathbb{C}}(z)} = [z^{n}]\,\mathrm{exp}\left(\sqrt{\theta}\sum_{k=1}^{\infty}\frac{z^{k}}{\sqrt{k}}\,\mathcal{N}_{k}\right), \label{cngen}
\end{equation}
where the notation $[z^{n}]\,h(z)$ denotes the coefficient of $z^{n}$ in the power series expansion of $h(z)$ around the point $z=0$. We could as well define this for $n \in \Z,$ but for negative integers, this would be $0$. 

The coefficients \eqref{cngen} were studied in detail in \cite{najnudel2023secular}, where a convergence in distribution result is shown for any $0 < \theta < \frac{1}{2}$. The recent work \cite{gorodetsky2024martingale} established analogues of this convergence for a class of random multiplicative functions with the same restriction $0 < \theta < \frac{1}{2}$. Although we do not consider it here, the case $\theta=1$ has attracted recent attention due to its connections to analytic number theory, in particular the phenomenon of `better than square root cancellation'.  In the critical $\theta=1$ case, the na\"ive bound $\mathbb{E}(|c_{n}|) \leq \sqrt{\mathbb{E}(|c_{n}|^{2})}$ overestimates the true order of magnitude in $n$. Sharp upper and lower bounds on this quantity are derived in \cite{soundzaman}; see also \cite{najnudel2023secular} for another proof of the upper bound.  

Moreover, for various randomized analytic number theory problems, such as random multiplicative functions, better-than-square-root cancellation has been proven \cite{H20,gerspach2022almost,soundzaman,xu2024better, gorodetsky2024short}. See also \cite{harper2024moments} for a recent review and see \cite{gu2024universality} for related work on non-Gaussian holomorphic multiplicative chaoses. 

For $0 < \theta < 1$, the Gaussian multiplicative chaos can be defined via \eqref{eq:GAF} as 
  \begin{equation}
    \mathrm{GMC}_\theta(\dif\vartheta) \coloneqq 
  \lim_{r \to 1} (1-r^2)^{\theta}| e^{\sqrt{\theta}G^{\C}(r e^{i\vartheta})}|^2 \dif\vartheta
  =\lim_{r \to 1} (1-r^2)^{\theta}e^{\sqrt{\theta}G(r e^{i\vartheta})} \dif\vartheta,
  \label{eq:gmc}
\end{equation}
where $G(z) = 2\mathrm{Re}(G^{\mathbb{C}}(z))$, with the sense of the limit being an in-probability, weak-$*$ convergence. The existence of this limit as a random measure is shown\footnote{The Gaussian field used in \cite[Eqn 6.1]{JunnilaSaksman} contains a constant term in its Fourier series definition in comparison to our field $G$. This turns out not to affect the convergence results of \cite{JunnilaSaksman}, see \cite[Appendix B]{najnudel2023secular}.} in \cite{JunnilaSaksman} where the convergence is shown to hold in $L^{q}$ for any $0<q<1$ (c.f.\ \cite[Proposition 3.1]{ChhaibiNajnudel}). Moreover it is shown in \cite{Remy} that the total mass of this particular random measure has law characterized by the natural analytic continuation of the Morris integral, i.e.\ for any $p >0$ with $p \theta < 1,$
\begin{equation}\label{eq:mass} 
  \Exp\bigl(\mathcal{M}_\theta^p\bigr)
  =\frac{\Gamma(1-p\theta)}{\Gamma(1-\theta)^{p}},
  \quad
  \text{where}
  \quad
  \mathcal{M}_\theta
  \coloneqq
  \frac{1}{2\pi}\int_0^{2\pi}\mathrm{GMC}_{\theta}(\dif\vartheta).
\end{equation}

The main goal of this paper is to prove:
\begin{theorem}[$L^{1}$-phase]
  For any $0 < \theta < 1$, we have the convergence in distribution
  \begin{equation}
    \frac{c_{n}}{\sqrt{\mathbb{E}(|c_{n}|^{2})}} \Wkto[n] \sqrt{\mathcal{M}_{\theta}}\mathcal{Z} \label{l1phaselim}
  \end{equation}
  where $\mathcal{Z}$ and $\mathcal{M}_{\theta}$ are independent, $\mathcal{Z}$ is standard complex normal, and $\mathcal{M}_{\theta}$ has law \eqref{eq:mass}.
  \label{th:l1phase}
\end{theorem}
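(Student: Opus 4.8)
The plan is to realize $c_n$ as the terminal value of a martingale in the frequency index and apply a martingale central limit theorem with a random limiting variance. Put $G^{\C}_{\le K}(z)\Def\sum_{k=1}^{K}z^{k}\mathcal N_{k}/\sqrt k$, $G^{\C}_{>K}\Def G^{\C}-G^{\C}_{\le K}$, and $G_{\le K}\Def 2\operatorname{Re}G^{\C}_{\le K}$; let $\filt_{j}\Def\sigma(\mathcal N_{1},\dots,\mathcal N_{j})$. Since $\mathcal N_{k}$ is standard complex normal, $\Exp(\mathcal N_{k}^{p})=0$ for every $p\ge1$, hence $\Exp(e^{\lambda\mathcal N_{k}})=1$ for all $\lambda\in\C$; consequently $M_{j}\Def\Exp(c_{n}\mid\filt_{j})=[z^{n}]\exp(\sqrt\theta G^{\C}_{\le j}(z))$, so $M_{0}=0$, $M_{n}=c_{n}$, and this is a martingale. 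Writing $A^{(j-1)}_{m}\Def[z^{m}]\exp(\sqrt\theta G^{\C}_{\le j-1}(z))$ (so $A^{(j-1)}_{m}=c_{m}$ once $j-1\ge m$) and expanding $\exp(\sqrt\theta z^{j}\mathcal N_{j}/\sqrt j)-1$, the increments are $D_{j}\Def M_{j}-M_{j-1}=\sum_{\ell\ge1}\frac{\theta^{\ell/2}}{\ell!\,j^{\ell/2}}\mathcal N_{j}^{\ell}A^{(j-1)}_{n-\ell j}$. Two facts to record at the outset: $v_{n}\Def\Exp(|c_{n}|^{2})=[u^{n}](1-u)^{-\theta}=\binom{n+\theta-1}{n}\sim n^{\theta-1}/\Gamma(\theta)$; and, because $\Exp(\mathcal N_{j}^{p})=0$ for $p\ge1$, the conditional pseudo-variance vanishes identically, $\Exp(D_{j}^{2}\mid\filt_{j-1})=0$, while $\Exp(|D_{j}|^{2}\mid\filt_{j-1})=\sum_{\ell\ge1}\frac{\theta^{\ell}}{\ell!\,j^{\ell}}|A^{(j-1)}_{n-\ell j}|^{2}$. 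The vanishing pseudo-variance forces the joint conditional covariance of $(\operatorname{Re}D_{j},\operatorname{Im}D_{j})$ to be $\tfrac12\,\Exp(|D_{j}|^{2}\mid\filt_{j-1})$ times the identity, so any Gaussian limit is automatically an isotropic (standard) complex normal.

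By the complex-valued martingale CLT in its stable form, \eqref{l1phaselim} follows once one proves: (a) the bracket $\langle M\rangle_{n}\Def\sum_{j=1}^{n}\Exp(|D_{j}|^{2}\mid\filt_{j-1})$ satisfies $\langle M\rangle_{n}/v_{n}\xrightarrow{p}\mathcal M_{\theta}$; and (b) a conditional Lyapunov condition $v_{n}^{-(q-1)}\sum_{j}\Exp(|D_{j}|^{2q}\mid\filt_{j-1})\xrightarrow{p}0$ for some $q>1$. For (b) I would fix $q\in(1,\min(1/\theta,1/(1-\theta)))$, which is non-empty for $0<\theta<1$, use the coefficient moment bounds $\Exp|A^{(j-1)}_{m}|^{2q}\lesssim(\Exp|A^{(j-1)}_{m}|^{2})^{q}$ and $\Exp|c_{m}|^{2q}\lesssim m^{q(\theta-1)}$ (valid in the range $q\theta<1$; these are the standard secular-coefficient moment estimates), split at $j\asymp n$ — for $j>n/2$ only the $\ell=1$ term survives, $D_{j}=\sqrt{\theta/j}\,c_{n-j}\mathcal N_{j}$ — and bound the expectation of the left side by $v_{n}^{-(q-1)}\sum_{j}j^{-q}\Exp|A^{(j-1)}_{n-j}|^{2q}$; the substitution $j=ny$, together with the super-polynomial decay of $[u^{n-j}]\exp(\theta\sum_{k<j}u^{k}/k)$ when $j\ll n$, makes this $O(n^{\theta-q})=o(1)$, and $L^{1}$-convergence gives the stated convergence in probability.

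The heart of the proof is (a), and the key is an exact identity obtained by conditioning at an intermediate level $K$. Factoring $\exp(\sqrt\theta G^{\C})=\exp(\sqrt\theta G^{\C}_{\le K})\exp(\sqrt\theta G^{\C}_{>K})$, using independence of the factors and the orthogonality of the resulting coefficients, and writing $P_{a}\Def[z^{a}]\exp(\sqrt\theta G^{\C}_{\le K})$, $q_{b}\Def[u^{b}]\exp(\theta\sum_{k>K}u^{k}/k)$ (so $q_{0}=1$ and $q_{b}=0$ for $1\le b\le K$), one gets $\Exp(|c_{n}|^{2}\mid\filt_{K})=\sum_{a}|P_{a}|^{2}q_{n-a}$, hence $\Exp(\langle M\rangle_{n}-\langle M\rangle_{K}\mid\filt_{K})=\Exp(|c_{n}|^{2}\mid\filt_{K})-|M_{K}|^{2}=\sum_{a\le n-K-1}|P_{a}|^{2}q_{n-a}$. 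Now $R(u)\Def\sum_{a}|P_{a}|^{2}u^{a}=\frac1{2\pi}\int_{0}^{2\pi}|e^{\sqrt\theta G^{\C}_{\le K}(\sqrt u\,e^{i\vartheta})}|^{2}\,\dif\vartheta$ is a.s.\ entire in $u$, while $\sum_{b}q_{b}u^{b}=(1-u)^{-\theta}\exp(-\theta\sum_{k\le K}u^{k}/k)$ has, on $\{|u|\le1\}$, only the branch point $u=1$, with local behaviour $\sim e^{-\theta H_{K}}(1-u)^{-\theta}$, $H_{K}\Def\sum_{k\le K}k^{-1}$. The Flajolet–Odlyzko transfer theorem, applied pathwise, then gives $\sum_{a}|P_{a}|^{2}q_{n-a}\sim e^{-\theta H_{K}}R(1)\,v_{n}$ as $n\to\infty$, i.e.\ $v_{n}^{-1}\Exp(\langle M\rangle_{n}-\langle M\rangle_{K}\mid\filt_{K})\to\mathcal M^{[K]}_{\theta}\Def e^{-\theta H_{K}}\cdot\frac1{2\pi}\int_{0}^{2\pi}e^{\sqrt\theta G_{\le K}(e^{i\vartheta})}\,\dif\vartheta$, which is the Wick-normalized mass of the Fourier-truncated field; by mollification invariance of Gaussian multiplicative chaos (a short comparison with the radial regularization in \eqref{eq:gmc}, using the convergence of \cite{JunnilaSaksman}) one has $\mathcal M^{[K]}_{\theta}\to\mathcal M_{\theta}$ in probability as $K\to\infty$. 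Finally $\Exp\langle M\rangle_{K}=\Exp|M_{K}|^{2}=[u^{n}]\exp(\theta\sum_{k\le K}u^{k}/k)=o(v_{n})$ for fixed $K$, so the low-frequency part of the bracket is negligible.

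Combining the previous paragraph, $v_{n}^{-1}\langle M\rangle_{n}$ equals $\mathcal M^{[K]}_{\theta}+o_{p}(1)$ plus the normalized high-frequency fluctuation $v_{n}^{-1}\bigl(\langle M\rangle_{n}-\langle M\rangle_{K}-\Exp(\langle M\rangle_{n}-\langle M\rangle_{K}\mid\filt_{K})\bigr)$; letting $K\to\infty$ after $n\to\infty$ (or taking $K=K(n)\to\infty$ slowly), (a) reduces to showing that this fluctuation is $o_{p}(v_{n})$, uniformly in large $n$, as $K\to\infty$. \emph{This is the main obstacle.} The natural route — controlling it in $L^{2}$ — is unavailable once $\theta\ge\tfrac12$ (it would require $\Exp|c_{n}|^{4}<\infty$), which is exactly the $L^{1}$-phase difficulty. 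The remedy is to observe that the fluctuation is itself a martingale-difference sum in an auxiliary index: $\langle M\rangle_{n}-\langle M\rangle_{K}-\Exp(\cdots\mid\filt_{K})=\sum_{i=K+1}^{n-1}\Delta_{i}$ with $\Delta_{i}\Def\sum_{j>i}\bigl(\Exp(|D_{j}|^{2}\mid\filt_{i})-\Exp(|D_{j}|^{2}\mid\filt_{i-1})\bigr)$ a martingale increment for $\filt_{i}$, so that a Burkholder/von Bahr–Esseen inequality in $L^{q}$ with $q\in(1,1/\theta)$ reduces the task to bounding $\sum_{i>K}\Exp|\Delta_{i}|^{q}$; here $\Delta_{i}$ records the dependence of the partial coefficients $A^{(j-1)}_{m}$ on $\mathcal N_{i}$, which carries a suppressing factor $\mathcal N_{i}/\sqrt i$, and the same coefficient moment bounds and singularity-analysis input as above give the required bound with a constant tending to $0$ as $K\to\infty$ (at the rate of the $L^{q}$-convergence of $\mathcal M^{[K]}_{\theta}$ to $\mathcal M_{\theta}$). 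This is where the bulk of the technical work lies. With (a) and (b) established, the martingale CLT yields $c_{n}/\sqrt{v_{n}}\Wkto\sqrt{\mathcal M_{\theta}}\,\mathcal Z$ with $\mathcal Z$ standard complex normal independent of $\mathcal M_{\theta}$, which is \eqref{l1phaselim}.
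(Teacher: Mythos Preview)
Your overall architecture --- realize $c_n$ as a $\Gfilt$--martingale and invoke a martingale CLT, reducing the problem to a Lindeberg/Lyapunov condition and convergence of the bracket to $\mathcal{M}_\theta$ --- is exactly the paper's strategy. The Lyapunov argument you sketch (using $\Exp|c_m|^{2q}\lesssim m^{q(\theta-1)}$ for $q\theta<1$, together with $A^{(j-1)}_m=\Exp[c_m\mid\Gfilt_{j-1}]$ and conditional Jensen to handle the truncated coefficients) is essentially the same as in \cite{najnudel2023secular}; the paper simply quotes that computation. The isotropy observation from $\Exp(D_j^2\mid\Gfilt_{j-1})=0$ is also correct and used there.

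Where you diverge from the paper is precisely at the point you flag as the main obstacle: the bracket convergence in the range $\tfrac12\le\theta<1$. You keep the exact martingale $M_j$, condition at a fixed level $K$, obtain $v_n^{-1}\Exp[\langle M\rangle_n\mid\Gfilt_K]\to\mathcal{M}^{[K]}_\theta$ by singularity analysis, and then propose to kill the remaining fluctuation $\langle M\rangle_n-\Exp[\langle M\rangle_n\mid\Gfilt_K]$ via a secondary martingale decomposition $\sum_{i>K}\Delta_i$ and an $L^q$ inequality with $1<q<1/\theta$. This step is where the sketch is not convincing. The leading contribution of $\Delta_i$ comes from the cross term $2\sqrt{\theta/i}\,\Re\bigl(\mathcal{N}_i\sum_a c_{a-i,i-1}\overline{c_{a,i-1}}\,q^{(i)}_{n-a}\bigr)$, which is of order $i^{-1/2}v_n$; thus $\sum_{i=K+1}^{n}\Exp|\Delta_i|^q$ is, on the face of it, of order $v_n^q\sum_{i>K}^{n} i^{-q/2}$, and for the admissible $q<1/\theta\le 2$ this sum does not decay in $K$ (indeed it grows with $n$). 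The claimed rate ``at the rate of the $L^q$--convergence of $\mathcal{M}^{[K]}_\theta$'' would require showing $v_n^{-1}\Delta_i$ is uniformly close in $L^q$ to the $i$-th increment of the GMC martingale $\mathcal{M}^{[i]}_\theta-\mathcal{M}^{[i-1]}_\theta$, summably in $i$; that is a substantial statement not covered by the singularity-analysis input, and it is exactly the place where the $L^2$-phase proof in \cite{najnudel2023secular} breaks down.

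The paper circumvents this entirely by \emph{modifying the martingale} rather than estimating the fluctuation. It replaces $\tilde c_n^{(\delta)}$ by the block-frozen approximation
\[
\tilde c_n^{(\delta,\epsilon)}=\sum_{k=0}^{K-1}\sum_{q=n_k+1}^{n_{k+1}}\mathcal{N}_q\sqrt{\tfrac{\theta}{n_k}}\,c_{n-q,n_k},
\]
so that in the bracket the truncation level is constant on each block. The resulting bracket is then a finite (in $k$) sum of quantities $n_k^{-\theta}\sum_q|c_{q,n_k}|^2\,\one[q\in I_k]$, each of which, via Parseval and a Tauberian argument, is controlled by
\[
X_k(r)\coloneqq e^{-\theta V_k(r)/2}\sum_m |c_{m,k}|^2 r^{2m}.
\]
The decisive observation (Lemma~\ref{mass2024}) is the martingale identity $X_k(r)=\Exp[X_\infty(r)\mid\Gfilt_k]$, which together with the $L^1$--convergence of the Poisson-regularized GMC yields $X_k(r_k)\to\mathcal{M}_\theta$ in $L^1$ along \emph{any} $r_k\to1$. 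No $L^q$ bound with $q>1$ on any fluctuation is ever needed; the whole argument lives in $L^1$. This is what buys the extension from $\theta<\tfrac12$ to the full $L^1$--phase.

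In short: your plan is on the right track, and everything up to and including the identification of the obstacle is sound, but the proposed $L^q$ route through $\sum_i\Exp|\Delta_i|^q$ does not close as stated when $\theta\ge\tfrac12$. The paper's fix is not to estimate that fluctuation at all, but to change the martingale so that the bracket becomes a finite combination of objects that converge to GMC in $L^1$ by a conditional-expectation trick.
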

\noindent This improves over \cite{najnudel2023secular} which establishes the same theorem in the range $\theta$ from $0 < \theta < \tfrac12$, and it now covers the full range of subcritical HMC.  In the case $\theta=1$, we expect that after rescaling the prelimit by $(\log n)^{1/4}$ there is again convergence in distribution to a random variable connected to the critical Gaussian multiplicative chaos; see also \cite{aggarwal2022conjectural} for simulations related to this convergence.  

We note there have been recent developments on the Fourier coefficients of the real GMC, which display a formula similar to \eqref{eq:mass}; in \cite{garban2023harmonic}, they show the $n$-th Fourier coefficient of $\mathrm{GMC}_\theta(    \dif \vartheta)$ converges in law (after renormalization) to a complex normal multiplied by $\sqrt{\mathcal{M}_{2\theta}}$ for $2\theta < 1$.

\subsection{Process convergence}

The proof we give extends to finite-dimensional marginal convergence of the $\{c_{n+k} : k \in \Z\}$ in the limit of $n\to\infty$.  To formulate this convergence, we can equivalently consider a fixed complex polynomial $p$ and define the statistic 
\[  
  X_n[p] \coloneqq  
  (\mathrm{HMC}_\theta, \vartheta \mapsto e^{i n \vartheta}p(e^{i \vartheta})).
\]
A small modification of the proof of Theorem \ref{th:l1phase} gives rise to the following generalization.  
\begin{theorem}[$L^{1}$-phase-multipoint]
  For any $0 < \theta < 1$, we have the convergence in distribution
  \begin{equation}
    \frac{
      \left( 
        X_{n}[p]
      \right)
      }{\sqrt{\mathbb{E}(|c_{n}|^{2})}} \Wkto[n] 
    \sqrt{\mtp}
    \mathcal{Z} \label{l1phaselimp}
  \end{equation}
  where $\mathcal{Z}$ is independent of $\mathrm{GMC}_{\theta}$, $\mathcal{Z}$ is standard complex normal, and where
  \[
    \mtp
    \coloneqq
    \frac{1}{2\pi}\int_0^{2\pi}|p(e^{i \vartheta})|^2 \mathrm{GMC}_{\theta}(\dif \vartheta).
  \]
  \label{th:l1phasep}
\end{theorem}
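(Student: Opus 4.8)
\emph{Proof strategy.} The plan is to mirror the proof of Theorem \ref{th:l1phase} essentially verbatim, using that $X_n[p]$ has the same algebraic shape as $c_n$. First I would record the identity
\[
  X_n[p] \;=\; \sum_{k=0}^{\deg p}\overline{p_k}\,c_{n+k},\qquad p(z)=\sum_{k=0}^{\deg p}p_k z^k ,
\]
which follows from $e^{in\vartheta}p(e^{i\vartheta})=\sum_k p_k e^{i(n+k)\vartheta}$ and the conjugate-linearity of the pairing \eqref{eq:hmc}; this already explains why Theorem \ref{th:l1phasep} packages the joint convergence of consecutive Fourier coefficients. Next I would set up the decomposition used for Theorem \ref{th:l1phase}: fix a frequency cut-off $m$, split $G^{\C}=G^{\C}_{\le m}+G^{\C}_{>m}$, put $B\Def e^{\sqrt\theta G^{\C}_{\le m}}=\sum_{a\ge0}b_a z^a$ (which is $\mathcal F_m$-measurable, $\mathcal F_m\Def\sigma(\mathcal N_1,\dots,\mathcal N_m)$) and $D\Def e^{\sqrt\theta G^{\C}_{>m}}=\sum_{\ell\ge0}d_\ell z^\ell$, so that $c_j=[z^j](BD)=\sum_a b_a d_{j-a}$, and --- conditionally on $\mathcal F_m$ --- realize $c_n$ (up to the superpolynomially small centering $\Exp[c_n\mid\mathcal F_m]=b_n$) as a sum of martingale increments produced by revealing $\mathcal N_{m+1},\mathcal N_{m+2},\dots$ one at a time, to which the martingale CLT of Theorem \ref{th:l1phase} (which needs only $L^2$ control, hence works for all $\theta<1$) is applied.

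The point is that $X_n[p]$ fits this framework after a trivial change. With $q(z)\Def\sum_{k=0}^{\deg p}\overline{p_k}z^{-k}$ and $\widehat B\Def qB=\sum_{a\ge -\deg p}\widehat b_a z^a$ (the $\mathcal F_m$-measurable function $B$ times the deterministic Laurent polynomial $q$) one has
\[
  X_n[p]\;=\;\sum_{k}\overline{p_k}\,[z^{n+k}](BD)\;=\;[z^n]\bigl(\widehat B\,D\bigr)\;=\;\sum_a\widehat b_a\, d_{n-a},
\]
which is exactly the representation of $c_n$ with $b_a$ replaced by $\widehat b_a=\sum_k\overline{p_k}\,b_{a+k}$. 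So I would rerun the martingale CLT word for word with $B\rightsquigarrow\widehat B$. The only quantity that really changes is the conditional variance. Using $\Exp[d_\ell\overline{d_{\ell'}}]=\delta_{\ell\ell'}\,\Exp|d_\ell|^2$ (immediate from $\Exp[D(z)\overline{D(w)}]=\exp(\theta\sum_{k>m}(z\bar w)^k/k)$ being a power series in $z\bar w$ alone), the regular variation of $\ell\mapsto\Exp|d_\ell|^2$ of index $\theta-1$, and $\Exp|d_n|^2/\Exp|c_n|^2\to e^{-\theta H_m}$ (with $H_m\Def\sum_{k\le m}k^{-1}$), I expect, as $n\to\infty$,
\[
  \frac{1}{\Exp|c_n|^2}\,\Exp\!\bigl[\,|X_n[p]|^2 \,\big|\, \mathcal F_m\bigr]
  \;\longrightarrow\;
  \frac{1}{2\pi}\int_0^{2\pi}\bigl|p(e^{i\vartheta})\bigr|^2\,e^{\sqrt\theta G_{\le m}(e^{i\vartheta})-\theta H_m}\,\dif\vartheta ,
\]
the right-hand side being $\mathcal M^{(m)}_\theta(|p|^2)$; here $G_{\le m}=2\,\mathrm{Re}\,G^{\C}_{\le m}$ and I used $|\widehat B(e^{i\vartheta})|^2=|q(e^{i\vartheta})|^2|B(e^{i\vartheta})|^2=|p(e^{i\vartheta})|^2 e^{\sqrt\theta G_{\le m}(e^{i\vartheta})}$. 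For $p\equiv1$ this is the computation behind Theorem \ref{th:l1phase} (whose right-hand side $\mathcal M^{(m)}_\theta(1)\to\mathcal M_\theta$), and the associated random predictable quadratic variation concentrates around $\mathcal M^{(m)}_\theta(|p|^2)$ by the same second-moment bound as there.

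Finally I would run the two limits. Conditionally on $\mathcal F_m$, the martingale CLT produces a centered Gaussian limit which is a genuine (circularly symmetric) complex Gaussian because the pseudo-variance vanishes, $\Exp[X_n[p]^2\mid\mathcal F_m]=\sum_{j,k}\overline{p_j}\,\overline{p_k}\,b_{n+j}b_{n+k}\to0$ (this reflects the invariance $c_n\lawequals e^{in\alpha}c_n$); so, given $\mathcal F_m$, $X_n[p]/\sqrt{\Exp|c_n|^2}\Wkto[n]\sqrt{\mathcal M^{(m)}_\theta(|p|^2)}\,\mathcal Z$ with $\mathcal Z$ standard complex normal independent of $\mathcal F_m$. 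Since $|p|^2$ is continuous, the convergence of the circle-GMC approximations used in the proof of Theorem \ref{th:l1phase} gives $\mathcal M^{(m)}_\theta(|p|^2)\to\frac1{2\pi}\int_0^{2\pi}|p(e^{i\vartheta})|^2\,\mathrm{GMC}_\theta(\dif\vartheta)=\mtp$ in probability as $m\to\infty$ (cf.\ \eqref{eq:gmc} and \cite{JunnilaSaksman}). Letting $m\to\infty$ with the same uniform-in-$m$ estimate used for Theorem \ref{th:l1phase} then upgrades the conditional statement to the unconditional \eqref{l1phaselimp}, with $\mathcal Z$ independent of $\mathrm{GMC}_\theta$.

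The hard part is none of the above but the bookkeeping needed to confirm that every negligibility estimate in the martingale CLT of Theorem \ref{th:l1phase} --- the Lindeberg-type control of the increments, the comparison of the $c_j$ with their $\mathcal F_m$-conditioned approximations, the concentration of the quadratic variation, and the interchange of $n\to\infty$ with $m\to\infty$ --- survives $B\rightsquigarrow\widehat B=qB$. Since $q$ is a fixed deterministic Laurent polynomial of degree $\deg p$, every relevant norm of the coefficient sequence ($\ell^1,\ell^2,\ell^\infty$) and of $\widehat B$ on the circle ($L^2$) is within a $p$-dependent constant of the corresponding norm of $B$, so each estimate carries over with an extra $O_p(1)$; and the finitely many negative-degree coefficients $\widehat b_{-\deg p},\dots,\widehat b_{-1}$ of $\widehat B$ are harmless, since for $n\ge\deg p$ they only evaluate the $d_\ell$ at indices $\ell\le n+\deg p$, a bounded shift that does not affect any asymptotics.
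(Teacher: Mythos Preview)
Your high-level plan—rerun the proof of Theorem~\ref{th:l1phase} with the polynomial $p$ threaded through—is exactly what the paper does (it proves Theorem~\ref{th:l1phasep} directly and recovers Theorem~\ref{th:l1phase} as $p\equiv 1$), and your $\widehat B=qB$ device plays, in spirit, the role of the paper's Parseval identity \eqref{parseval2024} with $p$ inserted. But the martingale you describe is not the one the paper uses, and the mechanism you invoke for the bracket would not give the full range $0<\theta<1$.

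The paper's martingale is not built from a \emph{fixed} cutoff $m$ followed by revealing the remaining Gaussians. It uses an $n$-dependent two-parameter approximation: with $n_k=(\lfloor\delta n\rfloor+k\lfloor\epsilon n\rfloor)\wedge n$, the martingale is $\tilde X_n^{(\delta,\epsilon)}[p]=\sum_k\sum_{q=n_k+1}^{n_{k+1}}\mathcal N_q\sqrt{\theta/n_k}\sum_s d_s\,c_{n-q+s,n_k}$ (see \eqref{martingalesum2025}), shown to approximate $X_n[p]$ in $L^2$ (Lemma~\ref{le:martingalerep2024}). Its bracket \eqref{condvar2024} is a sum of $|{\sum_s d_s c_{n-q+s,n_k}}|^2$ with $n_k$ of order $n$, and its in-probability limit is identified directly as $A(\theta,\delta,\epsilon)\mtp$; there is no intermediate $\mathcal F_m$-measurable target, and no $m\to\infty$ step.

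The genuine gap is your assertion that the predictable quadratic variation ``concentrates around $\mathcal M^{(m)}_\theta(|p|^2)$ by the same second-moment bound as there.'' A second-moment bound on the bracket is precisely what confined \cite{najnudel2023secular} to $\theta<\tfrac12$; for $\tfrac12\le\theta<1$ the mass $\mtp$ is not in $L^2$ and no such estimate is available. The paper's new input is an $L^1$ route: Parseval rewrites $\sum_q r^{2q}|\sum_s d_s c_{q+s,k}|^2$ as an integral of $|p|^2$ against $e^{\sqrt\theta G_k(re^{i\vartheta})}$; the normalized quantity $X_k(r)$ is then a uniformly integrable martingale in $k$ with $X_k(1)=\Exp[\mtp\mid\filt_k]$, and $L^1$ convergence of both the Dirichlet and Poisson GMC regularizations forces $X_k(r_k)\to\mtp$ along \emph{any} $r_k\to1$ (Lemma~\ref{mass2024}). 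A Tauberian step (Corollary~\ref{cor2024}, Lemma~\ref{tauberian2024}) converts this into the bracket convergence in probability, with $A(\theta,\delta,\epsilon)\to1$ as $\epsilon,\delta\to0$. None of this is a second-moment estimate; substituting one would reinstate the $\theta<\tfrac12$ restriction the paper is written to remove.
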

\noindent We shall prove Theorem \ref{th:l1phasep}, from which Theorem \ref{th:l1phase} follows as a direct corollary. Another corollary is the joint convergence of the shifted coefficients $\{c_{n+k}\}_{k=1}^{\ell}$.

\begin{corollary}
\label{cor:shiftedcoeffs}
  Fix an $\ell \in \N$.
  Define the positive-definite Toeplitz matrix 
  \[
    \mathcal{H}_{k_1,k_2} = \frac{1}{2\pi} 
    \int_{0}^{2\pi}
    e^{i \vartheta (k_1-k_2)}
    \mathrm{GMC}_{\theta}(\dif \vartheta),
    \quad 0 \leq k_1,k_2 \leq \ell.
  \]
  For any $0  <\theta < 1$, we have the convergence in distribution
  \[
   \frac{(c_{n+k}: 0 \leq k \leq \ell )}
   { \sqrt{ \Exp | c_n|^2} }
   \Wkto[n]
   \sqrt{\mathcal{H}}( \mathcal{Z}_k : 0 \leq k \leq \ell),
  \]
  where $(\mathcal{Z}_k : 0 \leq k \leq \ell)$ are i.i.d.\ standard complex normal random variables, independent of $\mathrm{GMC}_{\theta}$.
\end{corollary}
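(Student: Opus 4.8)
The plan is to reduce Corollary~\ref{cor:shiftedcoeffs} to Theorem~\ref{th:l1phasep} via the Cram\'er--Wold device, the bridge being that every complex-linear combination of the shifted coefficients $c_{n+k}$ is one of the statistics $X_n[p]$. Indeed, linearity of the pairing $(\mathrm{HMC}_\theta,\cdot)$ together with \eqref{eq:cn} gives, for any polynomial $p(z)=\sum_{k=0}^{\ell}a_k z^k$,
\[
  X_n[p]
  = (\mathrm{HMC}_\theta,\ \vartheta \mapsto e^{i n\vartheta}p(e^{i\vartheta}))
  = \sum_{k=0}^{\ell}\overline{a_k}\,c_{n+k},
\]
so that as $a=(a_0,\dots,a_\ell)$ runs over $\C^{\ell+1}$ the quantities $X_n[p]$ range over all such combinations. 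Fixing $a$, writing $p_a(z)=\sum_k a_k z^k$, and applying Theorem~\ref{th:l1phasep} with $p=p_a$ yields
\[
  \frac{1}{\sqrt{\Exp|c_n|^2}}\sum_{k=0}^{\ell}\overline{a_k}\,c_{n+k}
  \;\Wkto[n]\;
  \Bigl(\tfrac{1}{2\pi}\int_0^{2\pi}|p_a(e^{i\vartheta})|^2\,\mathrm{GMC}_\theta(\dif\vartheta)\Bigr)^{1/2}\mathcal{Z},
\]
with $\mathcal{Z}$ standard complex normal independent of $\mathrm{GMC}_\theta$; expanding $|p_a(e^{i\vartheta})|^2$ as a trigonometric polynomial and integrating term by term against $\mathrm{GMC}_\theta$ identifies the scale as $\sum_{j,k}a_j\overline{a_k}\,\mathcal{H}_{jk}$.

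It remains to identify the claimed joint limit. Conditionally on $\mathrm{GMC}_\theta$ (hence on $\mathcal{H}$), the vector $\sqrt{\mathcal{H}}\,(\mathcal{Z}_k:0\le k\le\ell)$ is a centred complex Gaussian vector whose pseudo-covariance vanishes, since $\Exp\mathcal{Z}_k^2=0$, and whose covariance is $\mathcal{H}$ itself, because $\mathcal{H}$ is Hermitian and positive semidefinite, so $\sqrt{\mathcal{H}}$ is Hermitian and $\sqrt{\mathcal{H}}\sqrt{\mathcal{H}}^{*}=\mathcal{H}$. Hence $\sum_k\overline{a_k}\,(\sqrt{\mathcal{H}}\mathcal{Z})_k$ is, conditionally on $\mathrm{GMC}_\theta$, a centred proper complex Gaussian with vanishing pseudo-variance and variance $\sum_{j,k}\overline{a_j}a_k\,\mathcal{H}_{jk}=\frac{1}{2\pi}\int_0^{2\pi}|p_a(e^{-i\vartheta})|^2\,\mathrm{GMC}_\theta(\dif\vartheta)$; since $\mathrm{GMC}_\theta$ is invariant in law under the reflection $\vartheta\mapsto-\vartheta$ (a consequence, via \eqref{eq:GAF}, of the invariance in law of the family $\{\mathcal{N}_k\}$ under complex conjugation) — equivalently, $\mathcal{H}\stackrel{d}{=}\overline{\mathcal{H}}$ — this random variable has the same law as $\frac{1}{2\pi}\int_0^{2\pi}|p_a(e^{i\vartheta})|^2\,\mathrm{GMC}_\theta(\dif\vartheta)$. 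Therefore $\sum_k\overline{a_k}\,(\sqrt{\mathcal{H}}\mathcal{Z})_k$ and $(\tfrac{1}{2\pi}\int|p_a(e^{i\vartheta})|^2\mathrm{GMC}_\theta(\dif\vartheta))^{1/2}\mathcal{Z}$ have the same law; that is, every one-dimensional marginal of $\sqrt{\mathcal{H}}\,(\mathcal{Z}_k)$ agrees in law with the corresponding limit from the first step. As this holds for all $a\in\C^{\ell+1}$, the (complex) Cram\'er--Wold theorem — applied to the $2(\ell+1)$-dimensional real vectors of real and imaginary parts, and using that $\mathrm{Re}$ is continuous, so convergence in distribution of $X_n[p_a]/\sqrt{\Exp|c_n|^2}$ passes to its real part — yields the asserted joint convergence.

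The argument is essentially bookkeeping, and I do not anticipate a genuine obstacle; the points that need care are tracking the complex-Gaussian second-order structure correctly — verifying that the relevant pseudo-covariances vanish and that the variances assemble into the Hermitian form attached to $\mathcal{H}$ — and reconciling the complex conjugation appearing in the HMC pairing, for which the reflection symmetry of $\mathrm{GMC}_\theta$ (equivalently $\mathcal{H}\stackrel{d}{=}\overline{\mathcal{H}}$) and the rotational invariance of the standard complex normal are precisely what is required. One should also note that Cram\'er--Wold does not require the complex normals $\mathcal{Z}$ appearing in the various one-dimensional limits to be coupled to one another; only each one-dimensional marginal of the target need be identified, which the variance computation supplies.
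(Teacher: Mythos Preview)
Your proof is correct and follows the same overall strategy as the paper: reduce to Theorem~\ref{th:l1phasep} via Cram\'er--Wold, using that complex-linear combinations of the $c_{n+k}$ are precisely the statistics $X_n[p]$. The only difference is in the bookkeeping. The paper parametrizes by real pairs $(a_k,b_k)$ and chooses the polynomial coefficients $d_k$ so that $\Re X_n[p]+\Im X_n[p]=\sum_k(a_k\Re c_{n+k}+b_k\Im c_{n+k})$; it then matches characteristic functions on both sides directly in terms of the same $\mathcal{H}$, so no symmetry argument is needed. You instead parametrize by a complex vector $a$ and obtain $X_n[p_a]=\sum_k\overline{a_k}c_{n+k}$; this produces the variance $\sum_{j,k}a_j\overline{a_k}\mathcal{H}_{jk}$ on the $X_n$ side but $\sum_{j,k}\overline{a_j}a_k\mathcal{H}_{jk}$ on the target side, and you correctly reconcile these (which differ pathwise) via the reflection symmetry $\mathcal{H}\stackrel{d}{=}\overline{\mathcal{H}}$ of the GMC. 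Both routes are valid; yours is arguably cleaner conceptually but purchases that cleanliness with the extra symmetry input, whereas the paper's explicit choice of $d_k$ avoids it.
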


\subsection{The circular $\beta$-ensemble}

One way in which the HMC arises is through the limit of the characteristic polynomial of a random matrix.  The \emph{circular $\beta$-ensemble} is the joint density  $N$ points $\vartheta_{j} \in [0,2\pi)$ for $1 \leq j \leq N$ given by
\begin{equation}
  \text{C$\beta$E}_N(\vartheta_1,\ldots,\vartheta_N) \propto \prod_{1 \leq k < j \leq N}|e^{i\vartheta_k}-e^{i\vartheta_j}|^{\beta} \label{betaens}
\end{equation}
When $\beta=2$, i.e. $\theta = \tfrac{2}{\beta}=1,$ this distribution arises as the law of the eigenvalues of a Haar distributed unitary matrix and is better known as the CUE (Circular Unitary Ensemble). When $\beta \neq 2$ it also arises from the eigenvalue distribution of certain sparse random matrix models, see \cite{KillipNenciu}.  

The characteristic polynomial of these points is given by
\begin{equation}
  \chi_{N}(z) = \prod_{j=1}^{N}(1-ze^{-i\vartheta_j}), \label{charpoly}
\end{equation}
which would have the representation $\det(1-zU^*_{N})$ if $\{e^{i\vartheta_j} : 1 \leq j \leq N\}$ were the eigenvalues of the matrix $U_N.$

The \textit{secular coefficients}, defined most simply as the coefficients $c^{(N)}_{n}$ in the expansion of \eqref{charpoly} in its Fourier basis,
\begin{equation}
\chi_{N}(z) = \sum_{n=0}^{N}c^{(N)}_{n}z^{n} \label{chisec}
\end{equation}
can be connected to the Fourier coefficients $\{c_n\}$.  Moreover, from \cite{ChhaibiNajnudel}, there is a probability space realizing all $\{c_n^{(N)}\}$ so that 
\[
  c_n^{(N)} \Asto[N] c_n  \quad \text{for all } n \geq 0.
\]

From \cite[Lemma 7.2]{najnudel2023secular}, for any $\theta < 1$ there is a constant $C_\theta$ 
\[
  \Exp |c_n -   c_n^{(N)}|^2 \leq C_\theta \left(\Exp|c_n|^2\right)\frac{n}{N}.
\]
Thus the following corollary follows immediately.
\begin{corollary}
  For any $\theta < 1$ and any $N_n \to \infty$ with $n/N_n \to 0$ as $n \to \infty$, 
  \[
  \frac{(c^{(N_n)}_{n+k}: 0 \leq k \leq \ell )}
  { \sqrt{ \Exp | c_n|^2} }
  \Wkto[n]
  \sqrt{\mathcal{H}}( \mathcal{Z}_k : 0 \leq k \leq \ell).
  \]
\end{corollary}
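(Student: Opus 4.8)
The plan is to deduce this directly from Corollary~\ref{cor:shiftedcoeffs} together with the quantitative $L^2$-approximation of $c_n$ by $c_n^{(N)}$ recalled above, namely $\Exp|c_n - c_n^{(N)}|^2 \le C_\theta\,(\Exp|c_n|^2)\,n/N$ from \cite[Lemma 7.2]{najnudel2023secular}, via a Slutsky-type argument. First I would observe that Corollary~\ref{cor:shiftedcoeffs} already supplies the limit law for the vector $A_n \coloneqq (c_{n+k}/\sqrt{\Exp|c_n|^2} : 0 \le k \le \ell)$, so that it remains only to show the difference vector
\[
  D_n \coloneqq \left( \frac{c_{n+k}^{(N_n)} - c_{n+k}}{\sqrt{\Exp|c_n|^2}} : 0 \le k \le \ell \right)
\]
tends to $0$ in probability in $\C^{\ell+1}$; then $(c_{n+k}^{(N_n)}/\sqrt{\Exp|c_n|^2})_k = A_n + D_n$ converges in law to $\sqrt{\mathcal H}(\mathcal Z_k : 0 \le k \le \ell)$ by Slutsky's theorem.

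The one point deserving a remark is that the normalization $\Exp|c_n|^2$, attached to index $n$, also controls the fluctuations at the shifted indices $n+k$ for fixed $k$. Indeed, the explicit Gaussian second-moment computation gives $\Exp|c_n|^2 = \Gamma(n+\theta)/(\Gamma(\theta)\Gamma(n+1))$, which by Stirling is $\sim n^{\theta-1}/\Gamma(\theta)$; hence for each fixed $k \le \ell$ the ratio $\Exp|c_{n+k}|^2/\Exp|c_n|^2 \to 1$ as $n\to\infty$ and in particular stays bounded in $n$. With this in hand I would estimate, for each fixed $k$,
\[
  \Exp\left| \frac{c_{n+k}^{(N_n)} - c_{n+k}}{\sqrt{\Exp|c_n|^2}} \right|^2
  = \frac{\Exp|c_{n+k}^{(N_n)} - c_{n+k}|^2}{\Exp|c_n|^2}
  \le C_\theta\,\frac{\Exp|c_{n+k}|^2}{\Exp|c_n|^2}\cdot\frac{n+k}{N_n} \longrightarrow 0
\]
as $n\to\infty$, using the hypothesis $n/N_n \to 0$ and that $k$ is fixed. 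Summing over $0 \le k \le \ell$ gives $\Exp\|D_n\|^2 \to 0$, hence $D_n \to 0$ in probability, as required.

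I do not expect any genuine obstacle: this is a direct corollary of the two inputs above, and the only care needed is the mild comparison of normalizing constants at indices $n$ and $n+k$ in the second step (which is just the regular variation of $n \mapsto \Exp|c_n|^2$ with index $\theta-1$). Everything else is the triangle inequality, the passage from $L^2$-convergence to convergence in probability, and Slutsky's theorem applied in $\C^{\ell+1}$.
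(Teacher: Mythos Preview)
Your proof is correct and follows exactly the route the paper intends: the paper simply declares that the corollary ``follows immediately'' from Corollary~\ref{cor:shiftedcoeffs} together with the bound $\Exp|c_n - c_n^{(N)}|^2 \le C_\theta(\Exp|c_n|^2)\,n/N$, and your Slutsky argument is precisely the unpacking of that claim. The comparison of normalizations $\Exp|c_{n+k}|^2/\Exp|c_n|^2 \to 1$ that you flag is the only detail to check, and you handle it correctly.
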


\section*{Acknowledgements}

The first three authors were supported by the Swedish Research Council under grant no. 2021-06594 while in residence at Institut Mittag-Leffler in Djursholm, Sweden during the fall semester of 2024.  EP would like to acknowledge the support from a Canada NSERC Discovery grant. NS would like to acknowledge support from the Royal Society, grant URF\textbackslash R\textbackslash231028.

\section{Preliminaries from \cite{najnudel2023secular} and the Ewens sampling formula}
\label{sec:ewens}
By formula \eqref{cngen}, the secular coefficients $c_{n}$ can be expressed as
\begin{equation}
  c_{n} = \sum_{\vec{m} \in S_{n}}\,\prod_{k=1}^{n}\frac{\mathcal{N}_{k}^{m_{k}}}{m_{k}!}\,\left(\frac{\theta}{k}\right)^{m_{k}/2}. \label{cnsum}
\end{equation}
The summation is over the set $S_{n}$ of all compositions of $n$, that is $\vec{m} = (m_1,\ldots,m_n)$ is such that each $m_{k}$ is a non-negative integer satisfying
\begin{equation}
  \sum_{k=1}^{n}km_{k} = n.
\end{equation}
From \eqref{cnsum}, we have that $c_n$ is measurable with respect to $\Gfilt_n \coloneqq \sigma\left\{ \mathcal{N}_1, \dots, \mathcal{N}_n \right\}.$  So, going forward, we will make use of the filtration
\( \Gfilt = ( \Gfilt_n : n \in \N)\). We also make use of the following truncated coefficients $c_{n,q}$ defined as in \eqref{cnsum} but setting $\mathcal{N}_{k}=0$ for $k>q$:
\begin{equation}
  c_{n,q} \coloneqq \sum_{\substack{(m_{k}) : 1 \leq k \leq q\\ \sum_{k=1}^{q}km_{k}=n}}\prod_{k=1}^{q}\frac{\theta^{m_{k}/2}}{m_{k}!k^{m_k/2}}\,\mathcal{N}_{k}^{m_k}. \label{cnq}
\end{equation}
Taking the $L^{2}$-norm of \eqref{cnsum} gives
\begin{equation}
\begin{split}
  \mathbb{E}(|c_{n}|^{2}) &= \sum_{\substack{\vec{l} \in S_{n},\vec{m}\in S_{n}}}\prod_{k=1}^{n}\frac{\mathbb{E}(\mathcal{N}_{k}^{m_k}\overline{\mathcal{N}_{k}}^{l_k})}{m_{k}!l_{k}!}\left(\frac{\theta}{k}\right)^{(m_{k}+l_{k})/2} \label{l2first}\\
  &=\sum_{\substack{\vec{m}\in S_{n}}}\prod_{k=1}^{n}\frac{1}{m_{k}!}\left(\frac{\theta}{k}\right)^{m_{k}}
  \end{split}
\end{equation}
where we used independence of the family $\{\mathcal{N}_{k}\}_{k=1}^{\infty}$ and the Gaussian formula
\begin{equation}
  \mathbb{E}(\mathcal{N}_{k}^{m_k}\overline{\mathcal{N}_{k}}^{l_k}) = \mathbbm{1}_{m_k=l_k}(m_k)! \label{gausscomp2}.
\end{equation} 
Given a permutation $\sigma$ on $n$ symbols, we can characterize it using its cycle structure $(m_1,\ldots,m_n)$ where $m_{j}$ denotes the number of cycles in $\sigma$ having length $j$, and $\sum_{j=1}^{n}jm_{j} = n$.
\begin{definition}
  The \textit{Ewens sampling formula} is a probability distribution on cycle counts $\vec{M}^{(n)} = (M_1,\ldots,M_n)$ given by
  \begin{equation}
    \mathbb{P}(\vec{M}^{(n)} = \vec{m}) = 
    \frac{
    \mathbbm{1}_{\sum_{k=1}^{n}km_{k}=n}\,
    }{\binom{n+\theta-1}{\theta-1}}
    \prod_{k=1}^{n}\left(\frac{\theta}{k}\right)^{m_k}\frac{1}{m_{k}!}. \label{ewens}
  \end{equation}
\end{definition}
By the normalization of \eqref{ewens} and \eqref{l2first}, this gives the $L^{2}$ norm explicitly
\begin{equation}
\mathbb{E}(|c_{n}|^{2}) = \binom{n+\theta-1}{\theta-1}.
\end{equation}
A straightforward generalization of \eqref{l2first} gives, for $0 \leq q \leq n$,
\begin{equation}
\mathbb{E}(c_{n-q,q_{1}}\overline{c_{n-q,q_{2}}}) = \binom{n-q+\theta-1}{\theta-1}\mathbb{P}(L^{(n-q)}\leq q_1\wedge q_2), \label{covcnq}
\end{equation}
where $L^{(n)}$ denotes the longest cycle in a Ewens distributed permutation of length $n$. The properties of the Ewens measure are surveyed in detail in the text \cite{ABT03}. A key result used therein is that \eqref{ewens} is the joint law of $n$ independent random variables $(Z_1,\ldots,Z_{n})$ where each $Z_{k}$ is Poisson distributed with parameter $\theta/k$ subject to the condition that $T_{0n}=n$ where $T_{0n} = \sum_{j=1}^{n}jZ_{j}$. In the following we quote the limiting distributions of the random variables $T_{0n}$ and $L^{(n)}$.
\begin{lemma}(\cite[Section 4]{ABT03})
  \label{lem:t0n}
  Suppose that $r = r_{n} \in \mathbb{N}$ satisfies $r/n \to y \in (0,\infty)$ as $n \to \infty$. Then
  \begin{equation}
    \lim_{n \to \infty}n\mathbb{P}(T_{0n}=r) = p_{\theta}(y) \label{convt0n}
  \end{equation}
where $p_{\theta}(y)$ is an explicit probability density function with rapid tail decay $\sup_{y \geq n}p_{\theta}(y) \leq \theta^{n}/n!$.
  \end{lemma}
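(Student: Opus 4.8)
The plan is to bypass the conditioning in the Ewens formula and work directly with the independent sum $T_{0n}=\sum_{j=1}^{n}jZ_j$, $Z_j\sim\mathrm{Poisson}(\theta/j)$, treating the ``easy'' regime $r\le n$ by an exact computation and then bootstrapping to all $y>0$ via a recursion for the atoms of $T_{0n}$. For the easy regime: with $H_n=\sum_{k=1}^{n}k^{-1}$, independence gives $\mathbb{P}(T_{0n}=0)=\prod_{k=1}^n e^{-\theta/k}=e^{-\theta H_n}$, and since for $r\le n$ the event $\{T_{0n}=r\}$ does not involve $Z_{r+1},\dots,Z_n$,
\[
  \mathbb{P}(T_{0n}=r)=e^{-\theta H_n}\!\!\sum_{\substack{(m_k)_{k\le r}:\ \sum_k km_k=r}}\ \prod_{k=1}^{r}\frac{(\theta/k)^{m_k}}{m_k!}=e^{-\theta H_n}\binom{r+\theta-1}{\theta-1},
\]
by the normalization of the Ewens formula \eqref{ewens} with $r$ in place of $n$. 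Then $e^{-\theta H_n}=e^{-\theta\gamma}n^{-\theta}(1+o(1))$ and $\binom{r+\theta-1}{\theta-1}=\Gamma(r+\theta)/(\Gamma(\theta)\Gamma(r+1))\sim r^{\theta-1}/\Gamma(\theta)$, so for $r/n\to y\in(0,1]$ one gets $n\,\mathbb{P}(T_{0n}=r)\to e^{-\theta\gamma}\Gamma(\theta)^{-1}y^{\theta-1}$, which forces $p_\theta(y)=e^{-\theta\gamma}\Gamma(\theta)^{-1}y^{\theta-1}$ on $(0,1]$.

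Next I would record the size-biasing recursion: for all $r\ge1$ and all $n$,
\begin{equation}\label{eq:t0nrec}
  r\,\mathbb{P}(T_{0n}=r)=\theta\sum_{j=1}^{n}\mathbb{P}(T_{0n}=r-j),\qquad \mathbb{P}(T_{0n}=m)\coloneqq0\ \text{for}\ m<0.
\end{equation}
This comes from $r\,\one[T_{0n}=r]=\sum_{j=1}^n jZ_j\,\one[T_{0n}=r]$: taking expectations, using independence of $Z_j$ and $T_{0n}-jZ_j$ together with the Poisson identity $m\,\mathbb{P}(Z_j=m)=(\theta/j)\mathbb{P}(Z_j=m-1)$, the $j$-th term collapses to $\theta\,\mathbb{P}(T_{0n}=r-j)$. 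Two useful consequences: telescoping \eqref{eq:t0nrec} over $r\le n$ reproves the formula above; and subtracting the $r$- and $(r-1)$-instances of \eqref{eq:t0nrec} yields the \emph{forward} recursion $r\,\mathbb{P}(T_{0n}=r)=(r-1+\theta)\mathbb{P}(T_{0n}=r-1)-\theta\,\mathbb{P}(T_{0n}=r-1-n)$, in which the only ``delay'' term sits one block of width $n$ behind.

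With $h_n(y)\coloneqq n\,\mathbb{P}(T_{0n}=\lfloor ny\rfloor)$, two a priori bounds come out of \eqref{eq:t0nrec}: the $L^1$ bound $\int_0^{M}h_n(y)\,\dif y=\sum_{r<Mn}\mathbb{P}(T_{0n}=r)\le1$ uniformly in $n,M$; and, feeding this back into \eqref{eq:t0nrec}, $h_n(y)\le C_{\theta,\delta}$ uniformly for $y\in[\delta,M]$, for each $0<\delta<M$. Dividing the forward recursion by $r$ and using $\prod_{s'=s+1}^{r}\frac{s'-1+\theta}{s'}\sim(r/s)^{\theta-1}$, I would express $h_n(y)$ for $y$ in a block $(m,m+1)$ in terms of the values of $h_n$ on $(m-1,m]$ only, up to Riemann-sum errors controlled by the a priori bounds; an induction on $m\ge1$ then gives $h_n\to p_\theta$ locally uniformly on $(0,\infty)\setminus\Z$, hence everywhere by continuity of $p_\theta$, where $p_\theta$ is the unique continuous solution of the delay integral equation
\[
  y\,p_\theta(y)=\theta\int_{y-1}^{y}p_\theta(u)\,\dif u\quad(y>0),\qquad p_\theta(y)=e^{-\theta\gamma}\Gamma(\theta)^{-1}y^{\theta-1}\ \text{on}\ (0,1],
\]
equivalently $y\,p_\theta'(y)=(\theta-1)p_\theta(y)-\theta\,p_\theta(y-1)$ off $\Z$ — the $\theta$-analogue of Dickman's equation, reducing to $p_1=e^{-\gamma}\rho$ at $\theta=1$. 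That $p_\theta$ is a probability density follows from tightness of $T_{0n}/n$ ($\mathbb{E}[T_{0n}]=\theta n$, $\operatorname{Var}(T_{0n}/n)\to\theta/2$) plus the local limit, which together give $T_{0n}/n\Rightarrow p_\theta(y)\,\dif y$. Finally, the rapid decay: from $p_\theta(y)=\frac{\theta}{y}\int_{y-1}^{y}p_\theta\le\frac{\theta}{y}\sup_{u\ge y-1}p_\theta(u)$ an induction on $m$ gives $\sup_{y\ge m}p_\theta(y)\le\theta^{m}/m!$, the base case $m=1$ being immediate since $\int_{y-1}^{y}p_\theta\le\int_0^{\infty}p_\theta=1$ forces $p_\theta(y)\le\theta/y\le\theta$ for $y\ge1$.

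The genuinely delicate step is the bootstrap: obtaining control on $n\,\mathbb{P}(T_{0n}=r)$ uniform enough to legitimize passing from the discrete recursions to their integral-equation limits, especially near $y=0$ (where the limit density $y^{\theta-1}$ is unbounded for $\theta<1$) and near the block boundaries $y\in\Z$ (where $p_\theta'$ blows up). The $L^1$-bound together with the recursion-derived $L^\infty$-bound on $[\delta,M]$ are what make the Riemann-sum arguments go through. An alternative that sidesteps the recursion altogether is a saddle-point analysis of $\mathbb{P}(T_{0n}=r)=\frac{1}{2\pi i}\oint s^{-r-1}\exp\!\big(\theta\sum_{k\le n}(s^k-1)/k\big)\,\dif s$, which is more robust but computationally heavier; everything else reduces to Stirling/$\Gamma$-asymptotics and bookkeeping, and this is all classical (see \cite{ABT03}).
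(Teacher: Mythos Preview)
The paper does not prove this lemma at all: it is quoted verbatim from \cite[Section 4]{ABT03} and used as a black box.  So there is no ``paper's own proof'' to compare against.

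That said, your argument is correct and is essentially the classical one from \cite{ABT03}.  The exact computation for $r\le n$ is right (the loose phrase ``does not involve $Z_{r+1},\dots,Z_n$'' really means those variables are forced to be zero, but your formula already accounts for this via the global factor $e^{-\theta H_n}$).  The size-biasing recursion \eqref{eq:t0nrec} and its forward version are derived correctly, the resulting delay integral equation $y\,p_\theta(y)=\theta\int_{y-1}^{y}p_\theta$ is the right continuum limit, and your inductive tail bound $\sup_{y\ge m}p_\theta(y)\le\theta^m/m!$ is exactly the estimate quoted in the lemma.  You are also right that the only step requiring genuine care is the passage from the discrete recursion to the integral equation; the $L^1$ bound $\sum_r\mathbb{P}(T_{0n}=r)=1$ together with the recursion-derived pointwise bound $h_n(y)\le\theta/\delta+o(1)$ on $[\delta,\infty)$ are precisely what is needed to justify the Riemann-sum approximation block by block, and this is how \cite{ABT03} proceeds.
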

The function $p_{\theta}(y)$ also appears in the following result \cite[Lemma 4.23]{ABT03}, attributed to Kingman (1977).
\begin{lemma}
  \label{lem:kingman}
  As $n \to \infty$, we have the convergence in distribution $n^{-1}L^{(n)} \overset{d}{\longrightarrow} L^{(\infty)}$ where $L^{(\infty)}$ is a random variable with distribution function $F_{\theta}$ given by
  \begin{equation}
    F_{\theta}(x) = e^{\gamma_{\mathrm{E}}\theta}x^{\theta-1}\Gamma(\theta)p_{\theta}(1/x)
  \quad \text{for all } x \in (0,1]. \label{kingman}
  \end{equation}
\end{lemma}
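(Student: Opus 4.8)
The plan is to obtain an exact formula for the distribution function of $n^{-1}L^{(n)}$ directly from the conditioned--Poisson description of the Ewens measure, and then read off the limit from the local limit theorem of Lemma~\ref{lem:t0n}. Write $(M_1,\dots,M_n)$ for the cycle counts, so that $\{L^{(n)}\le r\}=\{M_j=0\ \text{for all}\ j>r\}$. Realising \eqref{ewens} as the law of independent Poissons $(Z_1,\dots,Z_n)$ with $Z_j\sim\Poisson(\theta/j)$ conditioned on $T_{0n}=\sum_{j=1}^n jZ_j=n$, this event corresponds to $\{Z_j=0,\ j>r\}$; intersecting with $\{T_{0n}=n\}$ forces $\sum_{j=1}^r jZ_j=n$, and independence of the $Z_j$ then yields
\[
  \mathbb{P}\!\left(L^{(n)}\le r\right)
  =\frac{\mathbb{P}\!\left(\sum_{j=1}^{r}jZ_j=n\right)\,\prod_{j=r+1}^{n}e^{-\theta/j}}{\mathbb{P}(T_{0n}=n)}.
\]

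Next I would fix $x\in(0,1]$, set $r=r_n:=\lfloor xn\rfloor$, and estimate the three factors. The product is $\prod_{j=r+1}^{n}e^{-\theta/j}=e^{-\theta(H_n-H_r)}\to x^{\theta}$, using $H_m=\log m+\gamma_{\mathrm{E}}+o(1)$. The denominator satisfies $n\,\mathbb{P}(T_{0n}=n)\to p_{\theta}(1)$ by Lemma~\ref{lem:t0n} with $y=1$. For the numerator, $\sum_{j=1}^{r}jZ_j$ is precisely the variable ``$T_{0N}$'' of Lemma~\ref{lem:t0n} with $N=r$, so applying that lemma in its equivalent joint form ``$N\,\mathbb{P}(T_{0N}=R)\to p_{\theta}(y)$ whenever $N\to\infty$ and $R/N\to y$'' with $N=r_n$, $R=n$ and $y=\lim(n/r_n)=1/x$ gives $r_n\,\mathbb{P}\!\left(\sum_{j=1}^{r_n}jZ_j=n\right)\to p_{\theta}(1/x)$. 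Combining the three estimates,
\[
  \mathbb{P}\!\left(n^{-1}L^{(n)}\le x\right)
  =\frac{\bigl(p_{\theta}(1/x)/r_n\bigr)\,x^{\theta}\,(1+o(1))}{p_{\theta}(1)/n}
  \;\longrightarrow\;\frac{p_{\theta}(1/x)\,x^{\theta-1}}{p_{\theta}(1)}.
\]

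It then remains to identify the constant $p_{\theta}(1)$. From the explicit Poisson weights and the Ewens normalisation used in \eqref{l2first}, $\mathbb{P}(T_{0N}=N)=e^{-\theta H_N}\binom{N+\theta-1}{\theta-1}$; combining $e^{-\theta H_N}\sim e^{-\theta\gamma_{\mathrm{E}}}N^{-\theta}$ with $\binom{N+\theta-1}{\theta-1}=\Gamma(N+\theta)/\bigl(\Gamma(\theta)\Gamma(N+1)\bigr)\sim N^{\theta-1}/\Gamma(\theta)$ gives $N\,\mathbb{P}(T_{0N}=N)\to e^{-\theta\gamma_{\mathrm{E}}}/\Gamma(\theta)$, so $p_{\theta}(1)=e^{-\theta\gamma_{\mathrm{E}}}/\Gamma(\theta)$ by Lemma~\ref{lem:t0n}. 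Substituting this into the previous display gives $\mathbb{P}(n^{-1}L^{(n)}\le x)\to e^{\gamma_{\mathrm{E}}\theta}\Gamma(\theta)x^{\theta-1}p_{\theta}(1/x)=F_{\theta}(x)$ for every $x\in(0,1]$. Since $n^{-1}L^{(n)}\in(0,1]$, since $F_{\theta}$ is continuous on $(0,1]$ (being built from the continuous density $p_{\theta}$), and since the rapid decay $\sup_{y\ge m}p_{\theta}(y)\le\theta^{m}/m!$ forces $F_{\theta}(0^{+})=0$, the extended $F_{\theta}$ is a genuine distribution function and this pointwise convergence is exactly the asserted convergence in distribution.

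The one delicate point is invoking Lemma~\ref{lem:t0n} for the numerator along the moving index $N=r_n$ with moving target $R=n$: one needs the local limit estimate as a genuine joint limit ($N\to\infty$, $R/N\to y$) rather than only along sequences of targets indexed by $N$. This is a routine reformulation and, given that Lemma~\ref{lem:t0n} already allows the target to depend on the index, presents no real difficulty; everything else is bookkeeping with harmonic sums and ratios of Gamma functions. (A less efficient alternative peels off the longest cycle---which is a.s.\ unique once $x>\tfrac12$---computes $\mathbb{E}[M_m]$ for $m>\tfrac n2$ by the same conditioning, recognises the resulting Riemann sum, and handles $x\le\tfrac12$ by induction on the size of the permutation; I would avoid this.)
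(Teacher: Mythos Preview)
Your argument is correct. The paper does not actually supply a proof of this lemma: it is quoted from \cite[Lemma 4.23]{ABT03} and attributed to Kingman (1977), with no further details. So there is no ``paper's own proof'' to compare against beyond the citation.

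What you have done is give a self-contained derivation from the other quoted preliminaries, namely the conditioned--Poisson representation of the Ewens measure and the local limit theorem for $T_{0n}$ (Lemma~\ref{lem:t0n}). Each step checks out: the exact identity for $\mathbb{P}(L^{(n)}\le r)$ via conditioning is correct; the three asymptotic pieces are handled correctly (with $n/r_n\to 1/x$ producing the factor $x^{\theta-1}$); and your evaluation $p_\theta(1)=e^{-\theta\gamma_{\mathrm{E}}}/\Gamma(\theta)$ from the explicit expression $\mathbb{P}(T_{0N}=N)=e^{-\theta H_N}\binom{N+\theta-1}{\theta-1}$ is exactly what makes the constants match $F_\theta$. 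Your caveat about invoking Lemma~\ref{lem:t0n} along the moving index $N=r_n$ is well placed but, as you say, is only a relabelling of that lemma's hypothesis. The only point you leave implicit is monotonicity of $F_\theta$, but that is automatic: $F_\theta$ is a pointwise limit of nondecreasing functions on $(0,1]$, hence nondecreasing, and together with continuity, $F_\theta(0^+)=0$, and $F_\theta(1)=1$ (which follows from your computation of $p_\theta(1)$) this makes $F_\theta$ a bona fide distribution function.
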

We will need the following constant defined in terms of $L^{(\infty)}$.
\begin{lemma}(\cite[Lemma 4.5]{najnudel2023secular})
\label{lem:cdef}
  Consider the quantity
  \begin{equation}
    C_{\delta} \coloneqq \theta\int_{\delta}^{1}(1-x)^{\theta-1}\mathbb{P}\left(L^{(\infty)} \leq \frac{x}{1-x}\right)\,\frac{\dif x}{x} \label{cdeldef}
  \end{equation}
  where $L^{(\infty)}$ is the limiting random variable from Lemma \ref{lem:kingman}. Then $C_{\delta}$ can be explicitly computed as
  \begin{equation}
    C_{\delta} = 1-\Gamma(\theta)e^{\gamma_{\mathrm{E}}\theta}\,\delta^{\theta-1}p_{\theta}(1/\delta), \label{cdelident}
  \end{equation}
 where $\gamma_{\mathrm{E}}$ is the Euler-Mascheroni constant. We have
  \begin{equation}
    C_{\delta} = 1+O(\delta), \qquad \delta \to 0. \label{cto1}
  \end{equation}
\end{lemma}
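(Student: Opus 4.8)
The plan is to recognize the integrand in \eqref{cdeldef} as an exact derivative. By Lemma~\ref{lem:kingman}, the right-hand side of \eqref{cdelident} is precisely $1 - F_\theta(\delta)$, so the claim is equivalent to
\[
  C_\delta = 1 - F_\theta(\delta) = \Pr\bigl( L^{(\infty)} > \delta \bigr),
\]
and it therefore suffices to show that $x \mapsto \theta(1-x)^{\theta-1}x^{-1}\,\Pr\bigl(L^{(\infty)} \le \tfrac{x}{1-x}\bigr)$ is the probability density of $L^{(\infty)}$ on $(0,1)$. Granting this, since $F_\theta$ is absolutely continuous on $[\delta,1]$ with $F_\theta(1)=1$ (as $L^{(\infty)} < 1$ almost surely), integrating from $\delta$ to $1$ gives $C_\delta = F_\theta(1) - F_\theta(\delta) = 1 - F_\theta(\delta)$, and \eqref{cdelident} then follows by substituting the explicit expression for $F_\theta$ from Lemma~\ref{lem:kingman}.

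The required density identity is a classical manifestation of the self-similar structure of the Poisson--Dirichlet law $\mathrm{PD}(\theta)$, of which $L^{(\infty)}$ is the largest atom: deleting that atom $x$ and rescaling the remainder by $(1-x)^{-1}$ returns a $\mathrm{PD}(\theta)$ constrained to have all atoms at most $\tfrac{x}{1-x}$, and $\theta(1-x)^{\theta-1}x^{-1}$ is the matching intensity; see \cite{ABT03}. The route I would actually write out avoids quoting this directly: one plugs the explicit form $F_\theta(z) = e^{\gamma_{\mathrm{E}}\theta}\Gamma(\theta)z^{\theta-1}p_\theta(1/z)$ from Lemma~\ref{lem:kingman} into both sides of the density identity and checks that, with $u = 1/x \in (1,\infty)$, it reduces to the delay-differential equation $u\,p_\theta'(u) + (1-\theta)p_\theta(u) + \theta\,p_\theta(u-1) = 0$ for $u>1$, together with the boundary behaviour $p_\theta(u) = e^{-\gamma_{\mathrm{E}}\theta}\Gamma(\theta)^{-1}u^{\theta-1}$ for $0 < u \le 1$; both are standard properties of the generalized Dickman function $p_\theta$ recorded in \cite{ABT03}. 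The exceptional points $u = 1/k$, $k \in \N$, of this equation create only integrable singularities or jumps in the density, so $F_\theta$ is indeed absolutely continuous on $[\delta,1]$ and the fundamental theorem of calculus applies.

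For the estimate \eqref{cto1}: having obtained $C_\delta = 1 - e^{\gamma_{\mathrm{E}}\theta}\Gamma(\theta)\delta^{\theta-1}p_\theta(1/\delta)$, apply the tail bound $\sup_{y \ge n}p_\theta(y) \le \theta^n/n!$ of Lemma~\ref{lem:t0n} with $n = \lfloor 1/\delta \rfloor$ to get $\delta^{\theta-1}p_\theta(1/\delta) \le \delta^{\theta-1}\theta^{\lfloor 1/\delta\rfloor}/\lfloor 1/\delta\rfloor!$, which tends to $0$ faster than any power of $\delta$; in particular it is $O(\delta)$, whence $C_\delta = 1 + O(\delta)$.

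I expect no genuine analytic difficulty here: once the integrand is identified as $F_\theta'$, the rest is bookkeeping. The one substantive step is that identification — equivalently, invoking the $\mathrm{PD}(\theta)$ self-similarity, or the Dickman delay-differential equation. Without it, evaluating \eqref{cdeldef} head-on would be unpleasant, since for $\delta < \tfrac12$ the factor $\Pr(L^{(\infty)} \le \tfrac{x}{1-x})$ is itself a multi-piece function with no elementary closed form.
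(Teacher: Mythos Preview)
Your argument is correct. The paper does not actually prove this lemma---it is quoted verbatim from \cite[Lemma~4.5]{najnudel2023secular} in the preliminaries section, so there is no in-paper proof to compare against. Your route is the natural one: rewrite the target identity as $C_\delta = 1 - F_\theta(\delta)$, recognize the integrand as $F_\theta'$, and verify this by differentiating the Kingman formula $F_\theta(x) = e^{\gamma_{\mathrm E}\theta}\Gamma(\theta)x^{\theta-1}p_\theta(1/x)$ and reducing to the Dickman delay-differential equation $u\,p_\theta'(u)+(1-\theta)p_\theta(u)+\theta\,p_\theta(u-1)=0$ together with the boundary form $p_\theta(u)=e^{-\gamma_{\mathrm E}\theta}\Gamma(\theta)^{-1}u^{\theta-1}$ on $(0,1]$. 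Your check of the two regimes $x\in(0,\tfrac12)$ and $x\in[\tfrac12,1)$ (i.e.\ $u>2$ versus $1<u<2$) is implicit but goes through cleanly, and the tail bound from Lemma~\ref{lem:t0n} indeed gives decay of $\delta^{\theta-1}p_\theta(1/\delta)$ faster than any power, so \eqref{cto1} follows. One cosmetic point: rather than asserting $L^{(\infty)}<1$ a.s., it is tidier to read $F_\theta(1)=1$ directly off the Kingman formula together with $p_\theta(1)=e^{-\gamma_{\mathrm E}\theta}/\Gamma(\theta)$ from the boundary form.
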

These facts are used in \cite[Lemma 4.1]{najnudel2023secular} to prove the following.
\begin{lemma}
  \label{le:martingalerep}
  Let $0 < \delta < 1$ and assume $\theta \in [0,1]$. Define
  \begin{equation}
  \tilde{c}_{n}^{(\delta)} \coloneqq \sum_{q=\lfloor \delta n \rfloor}^{n}\mathcal{N}_{q}\sqrt{\frac{\theta}{q}}\,c_{n-q,q-1} \label{martingalesum}
\end{equation}
Then
  \begin{equation}
    \lim_{\delta \to 0}\lim_{n \to \infty}\frac{\mathbb{E}(|c_{n}-\tilde{c}_{n}^{(\delta)}|^{2})}{\mathbb{E}(|c_{n}|^{2})} = 0.
  \end{equation}
\end{lemma}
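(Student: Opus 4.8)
The plan is to exploit an approximate orthogonality in $L^2$. I will first show that the cross term $\mathbb{E}\bigl(c_n\overline{\tilde c_n^{(\delta)}}\bigr)$ is real and equals $\mathbb{E}\bigl(|\tilde c_n^{(\delta)}|^2\bigr)$, so that
\[
\mathbb{E}\bigl(|c_n-\tilde c_n^{(\delta)}|^2\bigr)=\mathbb{E}\bigl(|c_n|^2\bigr)-\mathbb{E}\bigl(|\tilde c_n^{(\delta)}|^2\bigr),
\]
and then identify $\lim_{n\to\infty}\mathbb{E}\bigl(|\tilde c_n^{(\delta)}|^2\bigr)/\mathbb{E}\bigl(|c_n|^2\bigr)$ with the constant $C_\delta$ of Lemma~\ref{lem:cdef}, which by \eqref{cto1} tends to $1$ as $\delta\to0$. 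Morally, $\tilde c_n^{(\delta)}$ collects exactly those monomials of $c_n$ in \eqref{cnsum} whose largest part $q$ satisfies $q\ge\lfloor\delta n\rfloor$ and occurs with multiplicity one, and the claim is that this subfamily already carries asymptotically all of the $L^2$ mass.

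For the second moment of $\tilde c_n^{(\delta)}$: since $c_{n-q,q-1}$ is a polynomial in $\mathcal{N}_1,\dots,\mathcal{N}_{q-1}$, for $q\ne q'$ (say $q<q'$) the variable $\mathcal{N}_{q'}$ appears exactly once in $\mathcal{N}_q c_{n-q,q-1}\overline{\mathcal{N}_{q'}c_{n-q',q'-1}}$, so that term has zero expectation, and the diagonal terms give, by independence and $\mathbb{E}|\mathcal{N}_q|^2=1$, that $\mathbb{E}\bigl(|\tilde c_n^{(\delta)}|^2\bigr)=\sum_{q=\lfloor\delta n\rfloor}^{n}\tfrac{\theta}{q}\,\mathbb{E}\bigl(|c_{n-q,q-1}|^2\bigr)$. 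For the cross term I expand $c_n$ in powers of $\mathcal{N}_q$, writing $c_n=\sum_{j\ge0}\tfrac{(\theta/q)^{j/2}}{j!}\mathcal{N}_q^{\,j}\,\hat c^{(q)}_{n-jq}$ with $\hat c^{(q)}_m=[z^m]\exp\bigl(\sqrt\theta\sum_{k\ne q}z^k\mathcal{N}_k/\sqrt k\bigr)$ independent of $\mathcal{N}_q$; pairing against $\overline{\mathcal{N}_q}$ and using \eqref{gausscomp2} forces $j=1$, and the same monomial-matching computation behind \eqref{covcnq} (the monomials common to $\hat c^{(q)}_{n-q}$ and $c_{n-q,q-1}$ are precisely those supported on parts $\le q-1$) gives $\mathbb{E}\bigl(\hat c^{(q)}_{n-q}\overline{c_{n-q,q-1}}\bigr)=\mathbb{E}\bigl(|c_{n-q,q-1}|^2\bigr)$. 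Hence $\mathbb{E}\bigl(c_n\overline{\tilde c_n^{(\delta)}}\bigr)=\sum_{q=\lfloor\delta n\rfloor}^{n}\tfrac{\theta}{q}\,\mathbb{E}\bigl(|c_{n-q,q-1}|^2\bigr)=\mathbb{E}\bigl(|\tilde c_n^{(\delta)}|^2\bigr)$, which is real, and the displayed identity follows.

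It remains to evaluate the limit. By \eqref{covcnq} with $q_1=q_2=q-1$ and $\mathbb{E}(|c_n|^2)=\binom{n+\theta-1}{\theta-1}$,
\[
\frac{\mathbb{E}(|\tilde c_n^{(\delta)}|^2)}{\mathbb{E}(|c_n|^2)}=\sum_{q=\lfloor\delta n\rfloor}^{n}\frac{\theta}{q}\,\frac{\binom{n-q+\theta-1}{\theta-1}}{\binom{n+\theta-1}{\theta-1}}\,\mathbb{P}\bigl(L^{(n-q)}\le q-1\bigr).
\]
When $q/n\to x\in(\delta,1)$, the ratio of binomials tends to $(1-x)^{\theta-1}$ by Stirling, $(q-1)/(n-q)\to x/(1-x)$ so $\mathbb{P}(L^{(n-q)}\le q-1)\to\mathbb{P}(L^{(\infty)}\le x/(1-x))$ by Lemma~\ref{lem:kingman}, and $\theta/q\sim(\theta/n)x^{-1}$; thus the right-hand side is a Riemann sum converging to $\theta\int_\delta^1(1-x)^{\theta-1}\mathbb{P}(L^{(\infty)}\le x/(1-x))\,x^{-1}\dif x=C_\delta$ as in \eqref{cdeldef}. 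Combining with the first step, $\lim_{n\to\infty}\mathbb{E}(|c_n-\tilde c_n^{(\delta)}|^2)/\mathbb{E}(|c_n|^2)=1-C_\delta=O(\delta)$ by \eqref{cto1}, and letting $\delta\to0$ finishes the proof.

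The algebraic identities in the first two steps are routine; the real work is justifying the Riemann-sum convergence uniformly, and in particular controlling the contribution of $q$ close to $n$, where $n-q$ is small and Lemma~\ref{lem:kingman} does not apply directly. Here I would use the crude bounds $\binom{m+\theta-1}{\theta-1}\le C(m+1)^{\theta-1}$ and $\binom{n+\theta-1}{\theta-1}\ge cn^{\theta-1}$ (valid for $\theta\in(0,1]$) together with $\sum_{j=0}^{\lfloor\epsilon n\rfloor}(j+1)^{\theta-1}\lesssim(\epsilon n)^{\theta}$ to bound the sum over $q>(1-\epsilon)n$ by $O(\delta^{-1}\epsilon^{\theta})$ uniformly in $n$ — this is exactly the integrability of $(1-x)^{\theta-1}$ near $x=1$, which holds because $\theta>0$ — whereas on $q\le(1-\epsilon)n$ the summand is bounded and the two pointwise limits are uniform, so the standard Riemann-sum argument applies; sending $\epsilon\to0$ afterwards yields $C_\delta$.
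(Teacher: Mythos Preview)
Your proof is correct and follows essentially the same route as the one the paper cites from \cite[Lemma~4.1]{najnudel2023secular}: the orthogonality $\mathbb{E}(c_n\overline{\tilde c_n^{(\delta)}})=\mathbb{E}(|\tilde c_n^{(\delta)}|^2)$ reduces everything to computing $\mathbb{E}(|\tilde c_n^{(\delta)}|^2)/\mathbb{E}(|c_n|^2)$, which is exactly the sum $\mathcal{S}_1$ appearing in the proof of Lemma~\ref{le:martingalerep2024} and whose limit $C_\delta$ is identified there (and in \cite[Lemma~4.5]{najnudel2023secular}) via the same Riemann-sum/dominated-convergence argument you give. Your tail estimate near $q=n$ is precisely the integrability of $(1-x)^{\theta-1}$ used in the paper.
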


\section{Martingale approximation and convergence}
\label{sec:mgle1}
In this section we begin by finding an alternative martingale structure in the sum \eqref{cnsum}. This opens up the possibility of applying a central limit theorem for martingales, and we explain why this is relevant for our proof of Theorem \ref{th:l1phase}. In contrast to \cite{najnudel2023secular}, we show that the bracket process converges in the whole $L^{1}$ phase $0<\theta<1$.
\subsection{Martingale approximation}
Note that $\tilde{c}_{n}^{(\delta)}$ in \eqref{martingalesum} is a martingale, for any fixed $n,\delta$, when viewed as sum over $q$ of martingale increments and with respect to the filtration \( \Gfilt = ( \sigma\{\mathcal{N}_{1},\ldots,\mathcal{N}_{q}\} : q \in \N)\). This follows from the fact that $c_{n-q,q-1}$ only depends on the first $q-1$ Gaussians. In this section, we make a further simplification of $\tilde{c}_{n}^{(\delta)}$. Let $\epsilon > 0$ be another fixed parameter and let 
\[
n_k \coloneqq  \left( \lfloor \delta n \rfloor  + k \lfloor \epsilon n \rfloor \right) \wedge n
\]
for $k \in \N_0$.  We let $K$ be the smallest $k$ such that $n_k = n$; note that for fixed $\delta,\epsilon$, $K$ is almost fixed in that it is nearly the smallest $k$ such that $\delta+k \epsilon \geq 1$, up to $\pm 1$. Then we define 
\begin{equation}
  \tilde{c}_{n}^{(\delta,\epsilon)} 
  \coloneqq   \sum_{k=0}^{K-1}
  \sum_{q = n_{k}+1}^{n_{k+1}}
  \mathcal{N}_{q}
  \sqrt{\frac{\theta}{n_k}}\,c_{n-q,n_{k}}. 
  \label{martingalesum2024}
\end{equation}
Note that $\tilde{c}_{n}^{(\delta,\epsilon)}$ continues to satisfy the martingale property in $q$. Next we show further:
\begin{lemma}[Martingale approximation]
  \label{le:martingalerep2024}
  Let $0 < \delta,\epsilon < 1$ and assume $\theta \in [0,1]$. We have
  \begin{equation}
    \lim_{\delta \to 0}
    \lim_{\epsilon \to 0}
    \lim_{n \to \infty}\frac{\mathbb{E}(|c_{n}-\tilde{c}_{n}^{(\delta,\epsilon)}|^{2})}{\mathbb{E}(|c_{n}|^{2})} = 0.
  \end{equation}
\end{lemma}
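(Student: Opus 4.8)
The plan is to deduce Lemma~\ref{le:martingalerep2024} from Lemma~\ref{le:martingalerep} by comparing $\tilde c_n^{(\delta,\epsilon)}$ directly with $\tilde c_n^{(\delta)}$. Since $\Exp|c_n - \tilde c_n^{(\delta,\epsilon)}|^2 \le 2\,\Exp|c_n - \tilde c_n^{(\delta)}|^2 + 2\,\Exp|\tilde c_n^{(\delta)} - \tilde c_n^{(\delta,\epsilon)}|^2$, Lemma~\ref{le:martingalerep} reduces everything to showing that for each fixed $\delta>0$,
\[
  \lim_{\epsilon\to 0}\ \limsup_{n\to\infty}\ \frac{\Exp|\tilde c_n^{(\delta)} - \tilde c_n^{(\delta,\epsilon)}|^2}{\Exp|c_n|^2} = 0 .
\]

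Both $\tilde c_n^{(\delta)}$ and $\tilde c_n^{(\delta,\epsilon)}$ are, increment-by-increment in $q$, of the form $\mathcal N_q$ times a $\sigma(\mathcal N_1,\dots,\mathcal N_{q-1})$-measurable factor (for $\tilde c_n^{(\delta,\epsilon)}$ this uses $n_k\le q-1$ on the block $n_k+1\le q\le n_{k+1}$), so their difference has the same structure and pairwise orthogonal increments. Apart from the single increment at $q=\lfloor\delta n\rfloor$ present only in $\tilde c_n^{(\delta)}$, whose squared $L^2$-norm is $O_\delta(n^{\theta-2})=o(\Exp|c_n|^2)$, the increment of the difference at $q$ in block $k$ is $\mathcal N_q\bigl(\sqrt{\theta/q}\,c_{n-q,q-1}-\sqrt{\theta/n_k}\,c_{n-q,n_k}\bigr)$. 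Writing $\sqrt{\theta/q}\,c_{n-q,q-1}-\sqrt{\theta/n_k}\,c_{n-q,n_k}=\sqrt{\theta/q}\,(c_{n-q,q-1}-c_{n-q,n_k})+(\sqrt{\theta/q}-\sqrt{\theta/n_k})\,c_{n-q,n_k}$, the two summands are orthogonal by \eqref{covcnq} (which also evaluates each norm), so, using $\Exp|\mathcal N_q|^2=1$, the squared $L^2$-norm of this increment equals
\[
  \binom{n-q+\theta-1}{\theta-1}\left[\Bigl(\sqrt{\tfrac{\theta}{q}}-\sqrt{\tfrac{\theta}{n_k}}\Bigr)^{2}\Pr\bigl(L^{(n-q)}\le n_k\bigr)+\tfrac{\theta}{q}\,\Pr\bigl(n_k<L^{(n-q)}\le q-1\bigr)\right].
\]
The first, ``frozen-weight'' term is easy: $q$ and $n_k$ lie in a common interval of length $\le\epsilon n+1$ with $n_k\ge\lfloor\delta n\rfloor$, so $(\sqrt{\theta/q}-\sqrt{\theta/n_k})^2\le C\epsilon^2/(\delta^3 n)$ uniformly; bounding the probability by $1$, using $\sum_{q}\binom{n-q+\theta-1}{\theta-1}\asymp n^\theta$ and $\Exp|c_n|^2=\binom{n+\theta-1}{\theta-1}\asymp n^{\theta-1}$, this contributes $O(\epsilon^2/\delta^3)\to0$ as $\epsilon\to0$.

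The second, ``frozen-truncation'' term is the crux, and bounding $\Pr(n_k<L^{(n-q)}\le q-1)$ by $1$ is hopeless; one must exploit that the window $(n_k,q-1]$ is narrow. Since $L^{(A)}\le A$, the probability vanishes unless $A\coloneqq n-q>n_k\ge\lfloor\delta n\rfloor$, and then, writing $G_A$ for the distribution function of $L^{(A)}/A$,
\[
  \Pr\bigl(n_k<L^{(A)}\le q-1\bigr)=G_A\bigl(\tfrac{q-1}{A}\bigr)-G_A\bigl(\tfrac{n_k}{A}\bigr)\ \le\ \omega\!\left(\tfrac{q-1-n_k}{A}\right)+\eta(A)\ \le\ \omega\!\left(\tfrac{3\epsilon}{\delta}\right)+\eta(A)
\]
for $n$ large, where $\omega$ is the modulus of continuity of the distribution function of $L^{(\infty)}$ (continuous by \eqref{kingman}, $p_\theta$ being continuous with rapid decay) and $\eta(A)\coloneqq 2\sup_t|G_A(t)-\Pr(L^{(\infty)}\le t)|\to0$ by Lemma~\ref{lem:kingman} and P\'olya's theorem. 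Bounding $\theta/q\le\theta/\lfloor\delta n\rfloor$ and summing over $q$, this contribution is at most
\[
  \frac{\theta}{\lfloor\delta n\rfloor}\left[\omega\!\left(\tfrac{3\epsilon}{\delta}\right)\sum_{A=0}^{n}\binom{A+\theta-1}{\theta-1}+\sum_{A=0}^{n}\binom{A+\theta-1}{\theta-1}\eta(A)\right]=O\!\left(\tfrac{\omega(3\epsilon/\delta)\,n^{\theta-1}}{\delta}\right)+o\!\left(\tfrac{n^{\theta-1}}{\delta}\right),
\]
where $\sum_{A\le n}\binom{A+\theta-1}{\theta-1}\asymp n^\theta$ and $\eta(A)\to0$ gives $\sum_{A\le n}\binom{A+\theta-1}{\theta-1}\eta(A)=o(n^\theta)$. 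Dividing by $\Exp|c_n|^2\asymp n^{\theta-1}$ and letting $n\to\infty$ then $\epsilon\to0$ (so $\omega(3\epsilon/\delta)\to0$), this too vanishes for each fixed $\delta$; adding the two contributions and finally letting $\delta\to0$ proves the lemma.

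The main obstacle is the frozen-truncation estimate: it is essential not to discard the upper endpoint $q-1$ of the window, since the gain $\omega(3\epsilon/\delta)$ comes precisely from $q-1-n_k\le n_{k+1}-n_k\lesssim\epsilon n$ measured against the scale $A=n-q\gtrsim\delta n$ on which $L^{(A)}$ lives, and the uniformity in $A$ — via P\'olya's theorem and continuity of Kingman's limit law — is what lets the sum over $q$ close. The resulting prefactor degenerates as $\delta\to0$, but this is harmless since $\epsilon\to0$ is taken first.
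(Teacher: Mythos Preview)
Your argument is correct and takes a genuinely different route from the paper.  Both proofs reduce (via Lemma~\ref{le:martingalerep} and martingale orthogonality of the $q$-increments) to controlling $\Exp|\tilde c_n^{(\delta)}-\tilde c_n^{(\delta,\epsilon)}|^2$.  From there the paper expands the square into three sums $\mathcal S_1,\mathcal S_2,\mathcal S_3$ and, by Riemann-sum / dominated-convergence computations using Lemma~\ref{lem:kingman}, shows that each of $\mathcal S_j/\Exp|c_n|^2$ converges (first $n\to\infty$, then $\epsilon\to 0$) to the \emph{same} explicit constant $C_\delta$ of Lemma~\ref{lem:cdef}, so that $\mathcal S_1-2\mathcal S_2+\mathcal S_3\to 0$.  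You instead split each increment as an orthogonal sum of a ``frozen-weight'' piece and a ``frozen-truncation'' piece (the orthogonality is exactly the identity $\Exp[c_{n-q,q-1}\overline{c_{n-q,n_k}}]=\Exp|c_{n-q,n_k}|^2$ from \eqref{covcnq}), bound the first by elementary calculus, and bound the second by upgrading Lemma~\ref{lem:kingman} to uniform CDF convergence via P\'olya's theorem and exploiting the (uniform) continuity of $F_\theta$.  Your approach is shorter and conceptually cleaner for this lemma in isolation: it never needs the precise value $C_\delta$, only that the window $(n_k,q-1]$ has width $O(\epsilon/\delta)$ on the scale of $L^{(A)}$.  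The paper's more explicit computation, on the other hand, is not wasted: the limit of $\mathcal S_3$ is exactly the constant $A(\theta,\delta,\epsilon)$ that reappears in the bracket-process Lemma~\ref{tauberian2024}, and the identification $A(\theta,\delta,\epsilon)\to C_\delta\to 1$ is read off directly from the $\mathcal S_3$ calculation.  So your route buys simplicity here at the cost of having to redo (or cite) that constant identification later.
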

\begin{proof}
By Lemma \ref{le:martingalerep}, it is sufficient to show the same for $\mathbb{E}(|\tilde{c}^{(\delta)}_{n}-\tilde{c}_{n}^{(\delta,\epsilon)}|^{2})$. We use the martingale property to write
\begin{equation}
\mathbb{E}(|\tilde{c}_{n}^{(\delta)}-\tilde{c}_{n}^{(\delta,\epsilon)}|^{2}) = \theta\sum_{k=0}^{K-1}\sum_{q=n_{k}+1}^{n_{k+1}}\mathbb{E}\left(\bigg{|}c_{n-q,q-1}/\sqrt{q}-c_{n-q,n_{k}}/\sqrt{n_{k}}\bigg{|}^{2}\right).
\end{equation}
We expand the square and compute the three terms 
\begin{align}
\mathcal{S}_{1} &:= \theta\sum_{k=0}^{K-1}\sum_{q=n_{k}+1}^{n_{k+1}}\frac{1}{q}\,\mathbb{E}(|c_{n-q,q-1}|^{2}),\\
\mathcal{S}_{2} &:= \theta\sum_{k=0}^{K-1}\sum_{q=n_{k}+1}^{n_{k+1}}\frac{1}{\sqrt{q}\sqrt{n_{k}}}\,\mathbb{E}(c_{n-q,q-1}\overline{c_{n-q,n_{k}}}),
\end{align}
and
\begin{equation}
\mathcal{S}_{3} := \theta\sum_{k=0}^{K-1}\sum_{q=n_{k}+1}^{n_{k+1}}\frac{1}{n_{k}}\,\mathbb{E}(|c_{n-q,n_{k}}|^{2}).
\end{equation}
We will prove the Lemma by showing that all three terms, when rescaled, converge to the same constant,
\begin{equation}
\frac{\mathcal{S}_{j}}{\mathbb{E}(|c_{n}|^{2})} \to {C(\delta)}, \qquad j=1,2,3, \label{goalsj}
\end{equation}
in the limit $n \to \infty$ followed by $\epsilon \to 0$, where $C(\delta)$ is the explicit constant in \eqref{cdeldef} and where we recall $\mathbb{E}(|c_{n}|^{2}) \sim n^{\theta-1}/\Gamma(\theta)$ is the asymptotics of the normalizing factor. Note that we already verified \eqref{goalsj} for $j=1$ in \cite[Lemma 4.5]{najnudel2023secular}. For $j=2$, formula \eqref{covcnq} gives
\begin{equation}
\mathbb{E}(c_{n-q,q-1}\overline{c_{n-q,n_{k}}}) = \binom{n-q+\theta-1}{\theta-1}\mathbb{P}(L^{(n-q)}\leq n_{k}),
\end{equation}
where we used that $n_{k} \leq q-1$. So
\begin{equation}
\mathcal{S}_{2} = \theta\sum_{k=0}^{K-1}\sum_{q=n_{k}+1}^{n_{k+1}}\frac{1}{\sqrt{q}\sqrt{n_{k}}}\binom{n-q+\theta-1}{\theta-1}\mathbb{P}(L^{(n-q)}\leq n_{k}). \label{st2int}
\end{equation}
We use the asymptotics of the binomial coefficient for large $n-q$ together with Lemma \ref{lem:kingman} and approximate the sum over $q$ as a Riemann integral. We justify this with dominated convergence arguments as follows. The sum over $q$ in \eqref{st2int} is equal to
\begin{equation}
\frac{\sqrt{n}}{\sqrt{n_{k}}}n^{\theta-1}\int_{n_{k}/n+1/n}^{n_{k+1}/n}f_{n}(\lfloor nx \rfloor) \dif x
\end{equation}
where 
\begin{equation}
f_{n}(\lfloor nx \rfloor) = \frac{\sqrt{n}}{\sqrt{\lfloor nx \rfloor}}n^{1-\theta}\binom{n-\lfloor nx \rfloor+\theta-1}{\theta-1}\mathbb{P}(L^{(n-\lfloor nx\rfloor)}\leq n_{k}).
\end{equation}
We make use of a classical bound on the ratio of two Gamma functions, see \cite[5.6.4]{NIST:DLMF}
\begin{equation*}
\frac{\Gamma(x+\theta)}{\Gamma(x+1)} < x^{\theta-1}, \qquad x>0, \quad \theta \in (0,1)
\end{equation*}
which gives the uniform bound
\begin{equation}
n^{1-\theta}\binom{n-\lfloor nx \rfloor+\theta-1}{\theta-1} \leq (1-x)^{\theta-1}, \quad n \in \mathbb{N}.
\end{equation}
Then $f_{n}(\lfloor nx \rfloor)$ is uniformly bounded by $2(1-x)^{\theta-1}$, which is integrable for all $\theta>0$. Furthermore, for each fixed $0<x<1$, we have $f_{n}(x) \to f(x)$ as $n \to \infty$, where
\begin{equation}
f(x) = \frac{1}{\Gamma(\theta)}\frac{1}{\sqrt{x}}(1-x)^{\theta-1}\mathbb{P}\left(L^{(\infty)} \leq \frac{\delta+k\epsilon}{1-x}\right).
\end{equation}
Then using the definition of $n_{k}$, the dominated convergence theorem gives
\begin{equation}
\mathcal{S}_{2} \sim \frac{n^{\theta-1}}{\Gamma(\theta)}\,\theta\sum_{k=0}^{K-1}\int_{\delta+k\epsilon}^{(\delta+(k+1)\epsilon)\wedge 1}\frac{1}{\sqrt{x}\sqrt{\delta+k\epsilon}}(1-x)^{\theta-1}\mathbb{P}\left(L^{(\infty)} \leq \frac{\delta+k\epsilon}{1-x}\right)\,\dif x. \label{ksumapprox}
\end{equation}
Now similarly, we view the sum over $k$ as approximating a Riemann integral. We write it as
\begin{equation*}
\int_{0}^{\epsilon(K-1)}\dif y\frac{1}{\epsilon}\int_{\delta+\epsilon\lfloor y/\epsilon \rfloor}^{\delta+\epsilon\lfloor y/\epsilon \rfloor +\epsilon}\dif x\,\frac{1}{\sqrt{x}}\frac{1}{\sqrt{\delta+\epsilon\lfloor y/\epsilon \rfloor}}(1-x)_+^{\theta-1}\mathbb{P}\left(L^{(\infty)} \leq \frac{\delta+\epsilon\lfloor y/\epsilon \rfloor}{1-x}\right)
\end{equation*}
where we have dropped the Riemann integral approximation terms, and where we have incorporated the $(\cdot \wedge 1)$ boundary condition into the integrand.
In the $x$ integral make the change of variables $x' = (x-\delta-\epsilon\lfloor y/\epsilon \rfloor)/\epsilon$. Then the sum over $k$ in \eqref{ksumapprox} is equal to
\begin{equation}
\begin{split}
&\int_{0}^{\epsilon(K-1)}
\dif y
\int_{0}^{1}
\dif x'\,\frac{1}{\sqrt{\delta+\epsilon\lfloor y/\epsilon \rfloor+x'\epsilon}}\frac{1}{\sqrt{\delta+\epsilon\lfloor y/\epsilon \rfloor}}\\
&\times(1-\delta-\epsilon\lfloor y/\epsilon \rfloor-x'\epsilon)_+^{\theta-1}\mathbb{P}\left(L^{(\infty)} \leq \frac{\delta+\epsilon\lfloor y/\epsilon \rfloor}{1-\delta-\epsilon \lfloor y/\epsilon \rfloor-x'\epsilon}\right).
\end{split}
\end{equation}
Note that for every fixed $\delta>0$, the integrand above is bounded by a constant times $(1-\delta-\epsilon\lfloor y/\epsilon \rfloor-x'\epsilon)^{\theta-1}$, which is integrable for all $\theta>0$. Note that this is needed because at $\lfloor y/\epsilon \rfloor =(K-1)$ and $x'=1$ this term is singular. Furthermore, the integrand converges pointwise as $\epsilon \to 0$ to
\begin{equation}
\frac{1}{\delta+y}(1-\delta-y)^{\theta-1}\mathbb{P}\left(L^{(\infty)} \leq \frac{\delta+y}{1-\delta-y}\right).
\end{equation}
Then application of the dominated convergence theorem shows that 
\begin{equation}
\mathcal{S}_{2} \sim \frac{n^{\theta-1}}{\Gamma(\theta)}\,\theta\int_{0}^{1-\delta}\frac{1}{\delta+y}(1-\delta-y)^{\theta-1}\mathbb{P}\left(L^{(\infty)} \leq \frac{\delta+y}{1-\delta-y}\right)\dif y.
\end{equation}
The substitution $u=\delta+y$ shows that this integral is exactly the constant $C_{\delta}$ in \eqref{cdeldef}. Hence \eqref{goalsj} is verified for $j=2$. The calculation of $\mathcal{S}_{3}$ is similar and gives, as $n \to \infty$,
\begin{equation}
\mathcal{S}_{3} \sim \frac{n^{\theta-1}}{\Gamma(\theta)}\,\theta\sum_{k=0}^{K-1}\int_{\delta+k\epsilon}^{\delta+(k+1)\epsilon}\frac{1}{\delta+k\epsilon}(1-x)_+^{\theta-1}\mathbb{P}\left(L^{(\infty)} \leq \frac{\delta+k\epsilon}{1-x}\right)\,\dif x, \label{s3}
\end{equation} 
for fixed $\delta, \epsilon >0$. Then taking $\epsilon \to 0$ by a similar Riemann sum approximation, we obtain \eqref{goalsj} for $j=3$. This completes the proof of the Lemma.
\end{proof}

\subsection{Martingale convergence}
We fix a polynomial 
\[
p(z) = \sum_{r=0}^\ell d_r z^r  
\]
with complex coefficients throughout this section.
Define 
\[
  \mtp 
  \coloneqq 
  \frac{1}{2\pi}
  \int_0^{2\pi} 
  |p(e^{i\vartheta})|^2
  \mathrm{GMC}_\theta(\dif \vartheta).
\]
As a consequence of Lemma \ref{le:martingalerep2024}, it suffices to show 
\begin{equation}
  \tilde{X}_{n}^{(\delta,\epsilon)}[p]
  \coloneqq   \sum_{k=0}^{K-1}
  \sum_{q = n_{k}+1}^{n_{k+1}}
  \mathcal{N}_{q}
  \sqrt{\frac{\theta}{n_k}}\, 
  \sum_{s=0}^\ell d_s c_{n-q+s,n_{k}},
  \label{martingalesum2025}
\end{equation}
converges in law since 
\[
  \lim_{\delta\to 0}
  \lim_{\epsilon \to 0}
  \lim_{n\to\infty}
  \frac{
    \mathbb{E}(
    |\tilde{X}_{n}^{(\delta,\epsilon)}[p]
    -{X}_{n}[p]|^2
    )
  }
  {\mathbb{E}(|c_{n}|^{2})}
  =0,
\]
\begin{remark}
  Note that to properly apply Lemma \ref{le:martingalerep2024} in this case, we would rather need a version in which $n_k$ is replaced by
  \[
  n_k(s) \coloneqq  (\lfloor \delta (n + s) \rfloor  +k \lfloor \epsilon (n + s) \rfloor) \wedge (n+s),
  \]
  for $s \in \mathbb{N}_0$ is fixed.  The details of the proof are unchanged.
\end{remark}

As $\tilde{X}_n$ is a complex martingale (note, not in the index $n$ but rather viewing the sum over $q$ as a sum of martingale increments), we can define its bracket process
\begin{equation}
  \begin{split}
    \mathcal{M}_{\theta,\delta,\epsilon,n}
    &\coloneqq
    \frac{1}{\mathbb{E}(|c_{n}|^{2})}
    \,
    \sum_{k=0}^{K-1}
    \sum_{q = n_{k}+1}^{n_{k+1}}
    \frac{\theta}{n_k}\,
    \left|
    \sum_{s=0}^\ell d_s c_{n-q+s,n_{k}}
    \right|^{2}. 
    \label{condvar2024}
  \end{split}
\end{equation}

We recall from Parseval's identity 
\begin{equation}
  \begin{aligned}
    &\sum_{n=0}^{\infty}
    \left|
    \sum_{s=0}^\ell d_s 
    c_{n+s,n_k}
    \right|^{2}
    r^{2n}
    =
    \sum_{s_1,s_2}
    d_{s_1} \overline{d_{s_2}}
    \sum_{n=0}^{\infty} 
    c_{n+s_1,n_k}
    \overline{c_{n+s_2,n_k}}
    r^{2n} \\
    &=
    \sum_{s_1,s_2}
    d_{s_1} \overline{d_{s_2}}r^{-2s_2}
    \sum_{n=0}^{\infty} 
    c_{n+s_1-s_2,n_k}
    \overline{c_{n,n_k}}
    r^{2n} \\
    &=
    \frac{1}{2\pi}
    \int_{0}^{2\pi} 
    p(e^{i\vartheta}) 
    \overline{p(r^{-2}e^{i\vartheta})}
    e^{\sqrt{\theta}G_{n_k}(re^{i\vartheta})}
    \,d\vartheta,
  \end{aligned}
  \label{parseval2024}
\end{equation}
where 
\begin{equation}\label{Gn}
  G_k(z) 
  \coloneqq
  2\Re
  \left( 
  \sum_{\ell=1}^k  
  \frac{z^\ell}{\sqrt{\ell}}\mathcal{N}_\ell
  \right).
\end{equation}
We also introduce the variance of this process 
\begin{equation}
  V_k(r)
  \coloneqq 
  2
  \sum_{\ell=1}^k  
  \frac{r^{2\ell}}{{\ell}}
  =
  \Exp (  G_k(r)^2  ).
\end{equation}
Now we have the following convergence, which holds along any sequence of $r_k$ which tend to infinity:
\begin{lemma}\label{mass2024}
  Let $r_k \in [0,1)$ be any sequence with $r_k \to 1$ as $k \to \infty$.  Then 
  \[
    \left(
      \sum_{n=0}^{\infty}
      \left|
      \sum_{s=0}^\ell d_s 
      c_{n+s,k}
      \right|^{2}
      r_k^{2n}  
    \right)e^{-\theta V_k(r_k)/2}
    \Prto[k]
    \mtp.
  \]
\end{lemma}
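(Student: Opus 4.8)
The plan is to apply Parseval's identity \eqref{parseval2024} to re-express the quantity in Lemma \ref{mass2024} as a polynomially weighted, radially mollified chaos, then to identify it \emph{exactly} as a $\Gfilt_k$-conditional expectation of a bona fide approximant of $\mathrm{GMC}_\theta$; convergence then follows softly, from Scheffé's lemma and Lévy's upward martingale convergence theorem, with no second-moment estimate on a chaos mass (which is exactly what fails in the $L^1$ phase). First, \eqref{parseval2024} with truncation level $k$ and $r=r_k$ identifies the left-hand side of Lemma \ref{mass2024} with
\[
A_k \coloneqq \frac{1}{2\pi}\int_0^{2\pi} p(e^{i\vartheta})\,\overline{p(r_k^{-2}e^{i\vartheta})}\,e^{\sqrt{\theta}G_k(r_ke^{i\vartheta}) - \theta V_k(r_k)/2}\,\dif\vartheta.
\]
Since $p$ is a fixed polynomial and $r_k\to1$, we have $\sup_{\vartheta}\bigl|p(r_k^{-2}e^{i\vartheta}) - p(e^{i\vartheta})\bigr|\to0$; together with $\Exp\bigl(e^{\sqrt\theta G_k(r_ke^{i\vartheta}) - \theta V_k(r_k)/2}\bigr)=1$ (Gaussian Laplace transform, using $\Exp(G_k(re^{i\vartheta})^2)=V_k(r)$ for all $\vartheta$), this gives $\Exp|A_k-\mathcal{I}_k|\to0$, where $\mathcal{I}_k\coloneqq\frac{1}{2\pi}\int_0^{2\pi}|p(e^{i\vartheta})|^2 e^{\sqrt{\theta}G_k(r_ke^{i\vartheta}) - \theta V_k(r_k)/2}\,\dif\vartheta$. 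So it suffices to prove $\mathcal{I}_k\Prto[k]\mtp$.

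The key step is that $\mathcal{I}_k$ is a conditional expectation of an honest GMC approximant. Write $G=G_k+R_k$, where $R_k(\vartheta)\coloneqq 2\Re\bigl(\sum_{\ell>k}\tfrac{(r_ke^{i\vartheta})^\ell}{\sqrt\ell}\mathcal{N}_\ell\bigr)$ is, since $r_k<1$, an a.s.\ convergent centred real Gaussian independent of $\Gfilt_k$ with $\Exp(R_k(\vartheta)^2)=V(r_k)-V_k(r_k)$ for every $\vartheta$, where $V(r)\coloneqq-2\log(1-r^2)=\lim_k V_k(r)$. Put $\mathcal{J}_k\coloneqq\frac{1}{2\pi}\int_0^{2\pi}|p(e^{i\vartheta})|^2(1-r_k^2)^\theta e^{\sqrt\theta G(r_ke^{i\vartheta})}\,\dif\vartheta$. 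Conditioning on $\Gfilt_k$, using $\Exp(e^{\sqrt\theta R_k(\vartheta)}\mid\Gfilt_k)=e^{\theta(V(r_k)-V_k(r_k))/2}$ and $(1-r_k^2)^\theta=e^{-\theta V(r_k)/2}$, conditional Tonelli (all integrands nonnegative) yields the exact identity $\Exp(\mathcal{J}_k\mid\Gfilt_k)=\mathcal{I}_k$.

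It remains to see that $\mathcal{J}_k$ converges appropriately. As $|p|^2$ is continuous on $\T$, the defining convergence of $\mathrm{GMC}_\theta$ in \eqref{eq:gmc} (weak-$*$, in probability) gives $\mathcal{J}_k\Prto[k]\mtp$. Moreover $\Exp(\mathcal{J}_k)=\frac{1}{2\pi}\int_0^{2\pi}|p(e^{i\vartheta})|^2\,\dif\vartheta$ for every $k$, and $\Exp(\mtp)$ equals the same quantity because the intensity of $\mathrm{GMC}_\theta$ is Lebesgue measure (from \eqref{eq:mass} with $p\equiv1$ and rotational invariance of the law of $G$). Thus $\mathcal{J}_k\ge0$ converges in probability with convergence of first moments, so Scheffé's lemma upgrades this to $\mathcal{J}_k\to\mtp$ in $L^1$. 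Finally decompose $\mathcal{I}_k-\mtp=\Exp(\mathcal{J}_k-\mtp\mid\Gfilt_k)+\bigl(\Exp(\mtp\mid\Gfilt_k)-\mtp\bigr)$: the first term tends to $0$ in $L^1$ by the contraction property of conditional expectation and the previous line, and the second tends to $0$ a.s.\ and in $L^1$ by Lévy's upward theorem, since $\mtp$ is integrable and $\sigma(\bigcup_k\Gfilt_k)=\sigma(\mathcal{N}_\ell:\ell\ge1)$-measurable (the chaos being a measurable functional of $G$). Hence $\mathcal{I}_k\to\mtp$ in $L^1$, a fortiori in probability, which with the first paragraph proves the lemma.

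I do not expect a genuinely hard analytic step here: the content is the structural identity $\mathcal{I}_k=\Exp(\mathcal{J}_k\mid\Gfilt_k)$, which converts an $L^1$-phase chaos-approximation problem into soft measure theory. The two points deserving care are the input that the $\mathrm{GMC}_\theta$ intensity is Lebesgue (so that Scheffé applies and one avoids any uniform-integrability or moment bound on chaos masses, unavailable for $\theta\ge\tfrac12$), and checking that the Parseval manipulation \eqref{parseval2024} is valid for the infinite radial sum at truncation level $k$, which holds precisely because $r_k<1$.
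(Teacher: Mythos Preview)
Your proof is correct and follows essentially the same route as the paper's: both recognize the key structural identity that the quantity in question is $\E(\text{Poisson-regularized GMC approximant}\mid\Gfilt_k)$, and then split $\mathcal{I}_k-\mtp$ into $\E(\mathcal{J}_k-\mtp\mid\Gfilt_k)$ plus $\E(\mtp\mid\Gfilt_k)-\mtp$, controlling the two pieces by $L^1$ convergence of the regularization and by L\'evy's upward theorem respectively.

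The only substantive difference is in packaging. The paper cites standard GMC literature for the $L^1$ convergence of both the Poisson ($X_\infty(r)$) and Dirichlet ($X_k(1)$) regularizations, whereas you supply the $L^1$ upgrade for the Poisson approximant yourself via Scheff\'e's lemma (nonnegativity plus matching first moments), and you replace the Dirichlet-kernel input entirely by a direct appeal to L\'evy's theorem once $\mtp\in L^1$ is known. This makes your argument slightly more self-contained. You also make explicit the preliminary step of replacing $\overline{p(r_k^{-2}e^{i\vartheta})}$ by $\overline{p(e^{i\vartheta})}$, which the paper absorbs into a parenthetical remark.
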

\begin{proof}
  We introduce a two-parameter family of random variables
  \[
    X_k(r) \coloneqq 
    \left(
      \sum_{n=0}^{\infty}
      \left|
      \sum_{s=0}^\ell d_s 
      c_{n+s,k}
      \right|^{2}
      r^{2n}  
    \right)
    e^{-\theta V_k(r)/2},
    \quad k \in \N, r\in [0,1].
  \]
  We also let $X_\infty(r)$ be the limit of $X_k(r)$ as $k \to \infty$ for fixed $r\in [0,1)$, i.e.
  \[
    X_\infty(r) \coloneqq 
    \left(
      \sum_{n=0}^{\infty}
      \left|
      \sum_{s=0}^\ell d_s 
      c_{n+s}
      \right|^{2}
      r^{2n}  
    \right)
    e^{-\theta V_\infty(r)/2},
  \]
  Our goal is to show that $X_k(r_k) \Prto[k] \mtp$. 

  Using Parseval's identity, (c.f. \eqref{parseval2024} and \eqref{Gn}), $X_k(r)$ can be connected to GMC with a somewhat non-standard two-parameter regularization (note that the error term $\mathscr{P}(r)$ is killed by the diverging variance).
  However, by standard theory \cite{berestycki2017elementary,shamov2016gaussian,junnila2017uniqueness}, 
  \[
    \begin{aligned}
    &X_k(1) \Prto[k] \mtp, \\
    &X_\infty(r) \Prto[r][1]  \mtp, \\
    \end{aligned} 
  \]
  as the first is the classical Kahane-type Dirichlet-kernel regularization and the second is a Poisson-kernel regularization.
  As $\theta < 1$, these convergences additionally hold in $\operatorname{L}^1.$

  Now, we additionally have a martingale structure.  For all $r < 1$,
  \[
    \E\left( X_\infty(r) \middle \vert \filt_k \right)  = X_k(r).
  \]
  Moreover, by $\operatorname{L}^1$--convergence, on taking $r\to1$ on both sides, we can exchange limit and conditional expectation to conclude that $\{X_k(1): k\}$ is a uniformly-integrable martingale and 
  \[
    X_k(1) =   \E\left(  \mtp \middle \vert \filt_k \right).
  \]

   We have that 
  \[
    X_k(r_k) = \E\left( X_\infty(r_k) \middle \vert \filt_k \right),
  \]
  and hence 
  \[
    \begin{aligned}
    \E|X_k(r_k)-X_k(1)|
    &=\E\left| 
      \E\left( X_\infty(r_k)-\mtp \middle \vert \filt_k \right)
    \right| \\
    &\leq 
    \E\left| 
      X_\infty(r_k)-\mtp
    \right| \xrightarrow[k\to\infty]{} 0. \\
    \end{aligned}
  \]
  On the other hand, from the $\operatorname{L}^1$--convergence of $X_k(1)\to \mtp$, we conclude 
  \[
    X_k(r_k) \xrightarrow[k\to\infty]{\operatorname{L}^1} \mtp.
  \]

\end{proof}
As a corollary, we derive: 

\begin{corollary}\label{cor2024}
  For any integer $m \geq 0$, along any sequence $n_k \to \infty$,
  \[
    \frac{1}{n_k^\theta}
    \sum_{q} e^{-q m / n_k}
    \left|
      \sum_{s=0}^\ell d_s 
      c_{q+s,n_k}
    \right|^{2}
        \Prto[n_k]
    \mtp
    \times
    \left( 
    e^{\theta \gamma_E}
    \int_0^\infty e^{-m y} p_\theta(y) \dif y
    \right)
    ,
  \]
  with $\gamma_E$ the Euler-Mascheroni constant.
\end{corollary}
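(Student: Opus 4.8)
The plan is to deduce the statement from Lemma~\ref{mass2024} by choosing the regularization parameter so that the exponential weight $e^{-qm/n_k}$ becomes a geometric weight. Concretely, for $m\geq 1$ set $r_k \coloneqq e^{-m/(2n_k)}$, so that $r_k \in (0,1)$, $r_k \to 1$, and $r_k^{2q} = e^{-qm/n_k}$; then Lemma~\ref{mass2024} (applied with $k$ replaced by $n_k$) gives
\[
  \Bigl( \sum_{q} r_k^{2q} \Bigl| \sum_{s=0}^{\ell} d_s c_{q+s,n_k}\Bigr|^2 \Bigr) e^{-\theta V_{n_k}(r_k)/2} \Prto[n_k] \mtp.
\]
The case $m=0$ is handled the same way with $r_k = 1$, where instead of Lemma~\ref{mass2024} one invokes the convergence $X_{n_k}(1)\Prto[n_k]\mtp$ established inside its proof. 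Given this, it suffices to prove the purely deterministic asymptotics
\[
  \frac{e^{\theta V_{n_k}(r_k)/2}}{n_k^{\theta}} \xrightarrow[k\to\infty]{} e^{\theta\gamma_E}\int_0^\infty e^{-my}p_\theta(y)\,\dif y,
\]
since then a Slutsky-type argument (a sequence converging in probability times a deterministically convergent sequence) yields the claim.

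For the asymptotics of $V_{n_k}(r_k) = 2\sum_{\ell=1}^{n_k} e^{-m\ell/n_k}/\ell$, I would write $\sum_{\ell=1}^{n_k} e^{-m\ell/n_k}/\ell = \sum_{\ell=1}^{n_k} 1/\ell + \sum_{\ell=1}^{n_k}(e^{-m\ell/n_k}-1)/\ell$. The first sum is $\log n_k + \gamma_E + o(1)$. The second sum is a right-endpoint Riemann sum for $t\mapsto (e^{-mt}-1)/t$, a function that extends continuously to $[0,1]$ (with value $-m$ at the origin), so it converges to $\int_0^1 (e^{-mt}-1)/t\,\dif t$. Exponentiating, $e^{\theta V_{n_k}(r_k)/2}/n_k^{\theta} \to e^{\theta\gamma_E}\exp\bigl(\theta\int_0^1 (e^{-mt}-1)/t\,\dif t\bigr)$.

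It remains to identify $\exp\bigl(\theta\int_0^1 (e^{-mt}-1)/t\,\dif t\bigr) = \int_0^\infty e^{-my}p_\theta(y)\,\dif y$. I would argue via the representation $T_{0n} = \sum_{j=1}^{n} j Z_j$ with the $Z_j\sim\mathrm{Poisson}(\theta/j)$ independent (recalled in Section~\ref{sec:ewens}), whose probability generating function is $\E[x^{T_{0n}}] = \exp\bigl(\theta\sum_{j=1}^n (x^j-1)/j\bigr)$; evaluating at $x = e^{-m/n}$ and letting $n\to\infty$ gives $\E[e^{-m T_{0n}/n}] \to \exp\bigl(\theta\int_0^1 (e^{-mt}-1)/t\,\dif t\bigr)$ by the same Riemann-sum computation as above. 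On the other hand $\E[T_{0n}/n] = \theta$ and $\operatorname{Var}(T_{0n}/n) = \theta(n+1)/(2n)$ stays bounded, so $(T_{0n}/n)$ is tight; combined with the local limit theorem of Lemma~\ref{lem:t0n} and the tail bound $\sup_{y\geq N}p_\theta(y)\leq \theta^N/N!$ recorded there (which excludes escape of mass), $T_{0n}/n$ converges in distribution to the probability density $p_\theta$, and hence $\E[e^{-mT_{0n}/n}]\to\int_0^\infty e^{-my}p_\theta(y)\,\dif y$ since $y\mapsto e^{-my}$ is bounded and continuous. Comparing the two limits of $\E[e^{-mT_{0n}/n}]$ gives the identity (for $m=0$ it is just $\int_0^\infty p_\theta(y)\,\dif y = 1$). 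The main obstacle is this last step: one must be careful to rule out mass escaping to infinity for $T_{0n}/n$, so that the generating-function computation may legitimately be read off as the Laplace transform of $p_\theta$; everything else is bookkeeping built on Lemma~\ref{mass2024}.
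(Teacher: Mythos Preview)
Your proof is correct and follows essentially the same route as the paper: set $r_k=e^{-m/(2n_k)}$, invoke Lemma~\ref{mass2024}, and identify the constant $n_k^{-\theta}e^{\theta V_{n_k}(r_k)/2}$ via the probability generating function of $T_{0n}$ and the weak convergence $T_{0n}/n\Rightarrow p_\theta$. The only cosmetic differences are that you first pass through the intermediate Riemann integral $\int_0^1(e^{-mt}-1)/t\,\dif t$ before recognizing it as the Laplace transform (the paper identifies the finite sum directly with $\E[e^{-mT_{0n}/n}]$), and you explicitly separate out the case $m=0$ and the justification of weak convergence of $T_{0n}/n$, both of which the paper leaves implicit.
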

\begin{proof}
  Set $r_k = e^{-m/(2n_k)}$.  Then we have 
  \begin{multline*}
    \frac{1}{n_k^\theta}
    \sum_{q} e^{-m q / n_k}
    \left|
      \sum_{s=0}^\ell d_s 
      c_{q+s,n_k}
    \right|^{2} \\
    =
    \frac{1}{n_k^\theta}
    e^{\theta V_{n_k}(r_k)/2}
    \times 
    \left( 
    \sum_{q} r_k^{2q}
    \left|
      \sum_{s=0}^\ell d_s 
      c_{q+s,n_k}
    \right|^{2}
    e^{-\theta V_{n_k}(r_k)/2}
    \right).  
  \end{multline*}
  Thus applying Lemma \ref{mass2024}, it suffices to find the limit of the constant, 
  \[
    \frac{1}{n_k^\theta}
    e^{\theta V_{n_k}(r_k)/2}
    =
    \exp
    \left(
      \theta(\gamma_E+o_{n_k}(1))
      +
      \theta
      \sum_{\ell=1}^{n_k} 
      \frac{e^{-\ell m/n_k} - 1}{\ell}
    \right),
  \]
  where $\gamma_E = \lim_{n \to \infty} \left(-\log n + \sum_{\ell=1}^n \frac{1}{\ell}\right)$ is the Euler-Mascheroni constant.

  Recalling $T_{0n}$ from \eqref{convt0n}, we have that the probability generating function of $T_{0n}$ is given by 
  \[
    \Exp\left( z^{T_{0n}} \right) = 
    \prod_{j=1}^n \Exp\left( z^{jZ_j} \right)
    = 
    \prod_{j=1}^n e^{\tfrac{\theta}{j} (z^j-1)}.
  \]
  Hence taking $z=e^{-m/n}$ and using the weak-convergence of $T_{0n}/n$, 
  \[
    \exp\left( 
      \theta
      \sum_{\ell=1}^{n} 
      \frac{e^{-\ell m/n} - 1}{\ell}
    \right)
    =
    \Exp\left( e^{-m(T_{0n}/n)} \right)
    \xrightarrow[n\to\infty]{}
    \int_0^\infty e^{-m y} p_\theta(y) \dif y,
  \]
  which completes the proof.
\end{proof}
Now using Tauberian theory, we prove the following main result:
\begin{lemma}\label{tauberian2024}
  For any $0 < \delta,\epsilon < 1$, there is a (deterministic) constant $A(\theta,\delta,\epsilon)$ so that 
  \[
    \mathcal{M}_{\theta,\delta,\epsilon,n}
    \Prto[n]
    A(\theta,\delta,\epsilon) 
    \mtp.
  \]
  Moreover, we have that 
  \[
    \lim_{\delta \to 0} 
    \lim_{\epsilon \to 0} 
    A(\theta,\delta,\epsilon) =1.
  \]
\end{lemma}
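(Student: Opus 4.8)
The plan is to split the bracket process \eqref{condvar2024} block by block, identify the in-probability limit of each block, and then add up the finitely many contributions. Since for fixed $\delta,\epsilon$ the integer $K$ is eventually constant in $n$ (we may assume $(1-\delta)/\epsilon\notin\Z$, which is harmless as $\delta,\epsilon\to0$ in the end), it suffices to prove that for each fixed $0\le k\le K-1$ the $k$-th block
\[
  \mathcal{M}^{(k)}_{n}
  :=\frac{\theta}{n_k\,\Exp|c_n|^2}\sum_{q=n_k+1}^{n_{k+1}}\Bigl|\sum_{s=0}^\ell d_s\,c_{n-q+s,n_k}\Bigr|^2
\]
converges in probability to a random multiple $A_k\,\mtp$ of the chaos mass, with $A_k=A_k(\theta,\delta,\epsilon)$ deterministic and independent of $p$, and then set $A(\theta,\delta,\epsilon):=\sum_{k=0}^{K-1}A_k$; a sum of finitely many in-probability limits is the in-probability limit of the sum.

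The core is a Tauberian step. Fix $k$, substitute $j=n-q$, and introduce the random measures $\nu_m:=m^{-\theta}\sum_{j\ge0}\bigl|\sum_{s=0}^\ell d_s\,c_{j+s,m}\bigr|^2\,\delta_{j/m}$. Writing $a_k:=n-n_{k+1}$ and $b_k:=n-n_k-1$, one has $\mathcal{M}^{(k)}_n=\bigl(\theta\,n_k^{\theta-1}/\Exp|c_n|^2\bigr)\,\nu_{n_k}\bigl([a_k/n_k,\,b_k/n_k]\bigr)$. Corollary \ref{cor2024} — whose proof applies verbatim with the integer $m$ replaced by any real $t\ge0$ — asserts precisely that for each fixed $t\ge0$,
\[
  \int_0^\infty e^{-tu}\,\nu_{n_k}(\dif u)\ \Prto[n]\ \mtp\,e^{\theta\gamma_E}\!\int_0^\infty e^{-tu}p_\theta(u)\,\dif u .
\]
I would upgrade this to weak convergence of the random measures in probability: along any subsequence, extract a further subsequence on which the displayed convergence holds almost surely and simultaneously for every $t$ in a fixed countable dense subset of $[0,\infty)$ containing $0$; on that event the total masses converge to the finite limit $\mtp\,e^{\theta\gamma_E}$, so $(\nu_{n_k})$ is tight, and Helly's selection theorem, uniqueness of Laplace transforms, and the continuity at $0$ of the limiting Laplace transform (which excludes loss of mass at $+\infty$) identify every subsequential weak limit as $\mtp\,e^{\theta\gamma_E}\,p_\theta(u)\,\dif u$. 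As this limit is absolutely continuous and, with $\nu_k:=\delta+k\epsilon$ and $\nu_K:=1$, we have $a_k/n_k\to\alpha_k:=(1-\nu_{k+1})/\nu_k$ and $b_k/n_k\to\beta_k:=(1-\nu_k)/\nu_k$, this gives $\nu_{n_k}([a_k/n_k,\,b_k/n_k])\Prto[n]\mtp\,e^{\theta\gamma_E}\int_{\alpha_k}^{\beta_k}p_\theta(u)\,\dif u$.

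It remains to identify the constants. Using $\Exp|c_n|^2\sim n^{\theta-1}/\Gamma(\theta)$ and $n_k\sim\nu_k n$ yields $A_k=\theta\,\Gamma(\theta)\,e^{\theta\gamma_E}\,\nu_k^{\theta-1}\int_{\alpha_k}^{\beta_k}p_\theta$. The substitution $u=(1-x)/\nu_k$ together with the Kingman relation of Lemma \ref{lem:kingman}, in the form $p_\theta(y)=\frac{y^{\theta-1}}{\Gamma(\theta)e^{\gamma_E\theta}}\,\Pr(L^{(\infty)}\le 1/y)$ for $y>0$ (with $\Pr(L^{(\infty)}\le z)=1$ for $z\ge1$), turns this into
\[
  A_k=\frac{\theta}{\nu_k}\int_{\nu_k}^{\nu_{k+1}}(1-x)^{\theta-1}\,\Pr\!\Bigl(L^{(\infty)}\le\tfrac{\nu_k}{1-x}\Bigr)\dif x ,
\]
so that $A(\theta,\delta,\epsilon)=\sum_{k}A_k$ is exactly the $\epsilon$-Riemann sum appearing in the asymptotics \eqref{s3} of $\mathcal{S}_3$ (the factor multiplying $n^{\theta-1}/\Gamma(\theta)$). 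Hence, by the very Riemann-sum passage already carried out at the end of the proof of Lemma \ref{le:martingalerep2024} (dominating by a constant multiple of $(1-x)^{\theta-1}$), $\lim_{\epsilon\to0}A(\theta,\delta,\epsilon)=\theta\int_\delta^1(1-x)^{\theta-1}\Pr(L^{(\infty)}\le\tfrac{x}{1-x})\,\tfrac{\dif x}{x}=C_\delta$ by \eqref{cdeldef}, and finally $\lim_{\delta\to0}C_\delta=1$ by \eqref{cto1}.

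The main obstacle is the Tauberian step — extracting the windowed masses $\nu_{n_k}([a_k/n_k,b_k/n_k])$ from the Laplace-transform convergence supplied by Corollary \ref{cor2024}. This needs the (routine) extension of that corollary to real exponents, the subsequence argument promoting pointwise-in-$t$ in-probability convergence to weak convergence of random measures, and control of mass at infinity, for which the $t=0$ instance of Corollary \ref{cor2024} — convergence of the total mass — is precisely the required input. The remaining steps only reproduce, pathwise, the second-moment computation already performed for $\mathcal{S}_3$.
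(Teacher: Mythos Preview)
Your proof is correct and follows the same overall architecture as the paper's: block-by-block decomposition of $\mathcal{M}_{\theta,\delta,\epsilon,n}$, a Tauberian step for each block, and the identification of $A(\theta,\delta,\epsilon)$ with the $\mathcal{S}_3$-constant \eqref{s3} via Kingman's relation, after which $\lim_{\epsilon\to0}A=C_\delta\to1$ is read off from Lemma~\ref{lem:cdef}.

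The only substantive difference is in the Tauberian step itself. The paper follows the Karamata route: it sandwiches the indicator $\one\{\alpha_k\le y<\beta_k\}$ between polynomials in $e^{-y}$ and invokes Corollary~\ref{cor2024} only at integer exponents $m$, citing the in-probability Karamata theorem of \cite[Theorem~A.2]{najnudel2023secular}. You instead extend Corollary~\ref{cor2024} to real $t\ge0$ and promote the pointwise Laplace-transform convergence to weak convergence of the random measures $\nu_{n_k}$ via a subsequence/Helly/Laplace-uniqueness argument, using the $t=0$ instance (total mass) to rule out escape of mass at infinity. Both routes are textbook; yours is conceptually clean in that it phrases the problem directly as weak convergence of measures, while the paper's has the minor economy of not needing to revisit the proof of Corollary~\ref{cor2024} for non-integer exponents or to treat the $t=0$ boundary case separately.
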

\begin{proof}
  This follows the proof of the in-probability version of Karamata's Tauberian theorem (\cite[Theorem A.2]{najnudel2023secular}), with minor changes, and so we give a sketch of the proof.  We recall \eqref{condvar2024}, in terms of which
  \[
    \begin{aligned}
    \mathcal{M}_{\theta,\delta,\epsilon,n}
    &=
    \frac{1}{\mathbb{E}(|c_{n}|^{2})}
    \,
    \sum_{k=0}^{K-1}
    \sum_{q = n_{k}+1}^{n_{k+1}}
    \frac{\theta}{n_k}\,
    \left|
      \sum_{s=0}^\ell d_s 
      c_{n-q+s,n_k}
    \right|^{2}
     \\
    &\eqqcolon 
    \frac{1}{\mathbb{E}(|c_{n}|^{2})}
    \,
    \sum_{k=0}^{K-1}
    \theta n_k^{\theta-1} X_{\epsilon,k,n}.
    \end{aligned}
  \]
  Now we let $\tilde{h}^{(k,n)}$ be the function 
  \[
    \tilde{h}^{(k,n)}(x) \coloneqq \one\{ e^{-(n-n_{k}-1)/n_{k}} \leq x < e^{-(n-n_{k+1})/n_{k}}\}.
  \]
  Then 
  \[
    X_{\epsilon,k,n}
    = 
    \frac{1}{n_k^{\theta}}
    \sum_{q = 0}^\infty 
    \tilde{h}^{(k,n)}(e^{-q/n_k})
    \left|
      \sum_{s=0}^\ell d_s 
      c_{q+s,n_k}
    \right|^{2}.
  \]

  The function $\tilde{h}^{(k,n)}(x)$ converges almost everywhere as $n\to\infty$
  \[
    \tilde{h}^{k,n}(x)
    \xrightarrow[n\to\infty]{\mathrm{a.e.}}
    \one\{ \exp\left(-\tfrac{1-\delta-k\epsilon}{\delta+k\epsilon}\right)\leq x < \exp\left(-\tfrac{1-\delta-k\epsilon-\epsilon}{\delta+k\epsilon}\right)\}.
  \]
  So we define 
  \[  
    \tilde{h}^{k,\pm}(x)
    \coloneqq
    \tilde{h}^{k,\pm,\eta}(x)
    \coloneqq 
    \one\{ \exp\left(-\tfrac{1-\delta-k\epsilon}{\delta+k\epsilon}\right) \mp \eta \leq x < \exp\left(-\tfrac{1-\delta-k\epsilon-\epsilon}{\delta+k\epsilon}\right) \pm \eta
    \},
  \]
  so that for all $\eta >0$ and all $n$ sufficiently large (depending on $\eta$) and all $0 \leq x \leq 1$
  \[
    \tilde{h}^{k,-}(x)
    \leq 
    \tilde{h}^{k,n}(x)
    \leq
    \tilde{h}^{k,+}(x).
  \]
    
  The strategy now follows standard Tauberian theory methods. For any polynomial $P$ on $[0,1]$ we have from Corollary \ref{cor2024}
  \[
    \frac{1}{n_k^\theta}
    \sum_{q} P(e^{-q  / n_k})
    \left|
      \sum_{s=0}^\ell d_s 
      c_{q+s,n_k}
    \right|^{2}
    \Prto[k]
    \mtp
    \times
    \left( 
    e^{\theta \gamma_E}
    \int_0^\infty P(e^{-y}) p_\theta(y) \dif y
    \right).
  \]

  Using a combination of piecewise linear approximation and Weierstrass approximation, we can now sandwich $\tilde{h}$ by polynomials 
  \[
    P^{-,\eta}(x) 
    \leq \tilde{h}^{k,-}(x)
    \leq \tilde{h}^{k,n}(x) 
    \leq \tilde{h}^{k,+}(x)
    \leq P^{+,\eta}(x),
  \]
  with the property that 
  \[
  \int_0^{\infty} \left(P^{+,\eta}(e^{-y}) -  P^{-,\eta}(e^{-y})\right)p_\theta(y)\dif y \xrightarrow[\eta\to 0]{} 0,
  \]
  where we have used the density of polynomials in the $\mathrm{L}^1$-space on $[0,1]$ with probability measure $p_\theta( \log(1/x))\frac{\dif x}{x}$, the density of which vanishes at $0$ by the estimate in Lemma \ref{lem:t0n}.

  We conclude taking $n\to\infty$ followed by $\eta \to 0$ that
  \[
    X_{\epsilon,k,n}
    \Prto[n]
    \mtp
    \times
    \left( 
    e^{\theta \gamma_E}
    \int_0^\infty
    \one\{\tfrac{1-\delta-k\epsilon-\epsilon}{\delta+k\epsilon}\leq y < \tfrac{1-\delta-k\epsilon}{\delta+k\epsilon}\}
    p_\theta(y) \dif y
    \right).
  \]
  Hence 
    \begin{align}
    \mathcal{M}_{\theta,\delta,\epsilon,n}
    &=
    \sum_{k=0}^{K-1}
    \frac{\theta n_k^{\theta-1}}{\mathbb{E}(|c_{n}|^{2})}
    X_{\epsilon,k,n} \\
    &\Prto[n] \Gamma(\theta+1)
    \mtp 
    e^{\theta \gamma_E}
    \sum_{k=0}^{K-1}
    (\delta + k\epsilon)^{\theta-1}
    \int_{\tfrac{1-\delta-k\epsilon-\epsilon}{\delta+k\epsilon}}^{\tfrac{1-\delta-k\epsilon}{\delta+k\epsilon}}
    p_\theta(y) \dif y. \label{convm}
    \end{align}
Now make the substitution $y=\frac{1-x}{\delta+k\epsilon}$ and apply \eqref{kingman}. This shows that the right-hand side of \eqref{convm} is $ \mathcal{M}_\theta(|p|^2)\, A(\theta,\delta,\epsilon)$ where
 \begin{equation}
A(\theta,\delta,\epsilon) := \theta
    \sum_{k=0}^{K-1}\frac{1}{\delta+k\epsilon}
    \int_{\delta+k\epsilon}^{\delta+k\epsilon+\epsilon}(1-x)^{\theta-1}\mathbb{P}\left(L^{(\infty)} \leq \frac{\delta+k\epsilon}{1-x}\right)\dif x \label{a-form}
 \end{equation}
Note that this is the same constant given in \eqref{s3}. Comparing with \eqref{goalsj} with $j=3$, we get $A(\theta,\delta,\epsilon) \to C(\delta)$ as $\epsilon \to 0$ and from Lemma \ref{lem:cdef} we have $C(\delta) \to 1$ as $\delta \to 0$. We conclude that
\begin{equation}
\lim_{\delta \to 0}\lim_{\epsilon \to 0}A(\theta,\delta,\epsilon) = 1.
\end{equation}


\end{proof}

\subsection{Proof of main theorem using the martingale CLT}

\begin{proof}[Proof of Theorem \ref{th:l1phasep}]
  The details of the proof now follow in the same way as \cite{najnudel2023secular}.  We apply martingale central limit theorem of \cite{HH80}.  The Lindeberg condition of the CLT follows from computation of the sum of conditional fourth moments of the martingale increments (see \cite[Lemma 3.7]{najnudel2023secular}).
  Hence it follows that for any $\delta, \epsilon$ 
  \[
    \frac{\tilde{X}_{n}^{(\delta,\epsilon)}[p]}{\sqrt{\mathbb{E}(|c_{n}|^{2})}}
    \Wkto[n] \sqrt{A(\theta,\delta,\epsilon)\mtp} N_{\C},
  \]
  for a complex normal random variable $N_\C$ independent of the mass $\mathcal{M}_\theta.$  
  Hence, letting $\varrho$ be a metric which metrizes weak convergence, we have
  \[
    \lim_{\delta \to 0}
    \limsup_{\epsilon \to 0}
    \limsup_{n \to\infty }
    \varrho\left(\frac{\tilde{X}_{n}^{(\delta,\epsilon)}[p]}{\sqrt{\mathbb{E}(|c_{n}|^{2})}},\sqrt{\mtp} N_{\C}\right)
    =0.
  \]
  On the other hand, from Lemma \ref{le:martingalerep2024}, we also have 
  \[
    \lim_{\delta \to 0}
    \limsup_{\epsilon \to 0}
    \limsup_{n \to\infty }
    \varrho\left(\frac{\tilde{X}_{n}^{(\delta,\epsilon)}[p]}{\sqrt{\mathbb{E}(|c_{n}|^{2})}}, \frac{X_{n}[p])}{\sqrt{\mathbb{E}(|c_{n}|^{2})}}\right)
    =0,
  \]
  from which it follows 
  \[
  \frac{X_{n}[p]}{\sqrt{\mathbb{E}(|c_{n}|^{2})}} \Wkto[n]  \sqrt{\mtp} N_{\C}.
  \]
\end{proof}
 \subsection{Joint convergence of the shifted coefficients}
 \begin{proof}[Proof of Corollary \ref{cor:shiftedcoeffs}]
 By the Cramer-Wold device, it suffices to consider the convergence of the characteristic function, for real constants $\{a_k\}$ and $\{b_k\}$.
  \[
    \varphi_n \coloneq
    \Exp 
    \left(
      \exp\left(
        \frac{i}{{\sqrt{\mathbb{E}(|c_{n}|^{2})}}} 
        \sum_{k=0}^\ell (a_k \Re c_{n+k} + b_k \Im {c}_{n+k})
      \right)
    \right).
  \]
  Now suppose $p(z) = \sum_{k=0}^\ell d_k z^k$ for $d_k$ to be determined, and we look to make 
  \[
    \begin{aligned}
    (\Re X_n[p] + \Im X_n[p]) 
    &=
    \sum_{k=0}^\ell (a_k \Re c_{n+k} + b_k \Im {c}_{n+k}) \\
    &=
    \sum_{k=0}^\ell \frac{a_k - ib_{k}}{2} c_{n+k} 
    + \frac{a_k + ib_{k}}{2} \overline{c_{n+k}}\\
    \end{aligned}
  \]
  This is possible if we take 
  \[
    (\Re d_k + \Im d_k) = a_k 
    \quad \text{and} \quad 
    (-\Im d_k + \Re d_k) = b_k
    \Longleftrightarrow 
    2d_k = (a_k+b_k) + i(a_k-b_k).
  \]
  Thus, with this $p$, applying Theorem \ref{th:l1phasep}, we conclude 
  \[
    \begin{aligned}
    \varphi_n 
    =
    \Exp \left( 
      \exp\left(
      i \frac{(\Re X_n[p] + \Im X_n[p])}{\sqrt{\mathbb{E}(|c_{n}|^{2})}}
      \right) \right)
    \to 
    &\Exp \left( 
      \exp\left(
        i 
        ( \sqrt{\mtp})
        (\Re \mathcal{Z}   + \Im \mathcal{Z}) 
      \right) \right) \\
      =& \Exp \left( 
        \exp\left(
        -\mtp/2
        \right) \right) \eqqcolon \varphi_\infty.
    \end{aligned}
  \]
  Now we have that 
  \[
    \mtp 
    =\frac{1}{2\pi} 
    \int_{0}^{2\pi}
    \sum_{k_1,k_2} d_{k_1} \overline{d_{k_2}} e^{i \vartheta (k_1-k_2)}
    \mathrm{GMC}_{\theta}(\dif \vartheta).
  \]
  Note that 
  \[
    \begin{aligned}
    d_{k_1}\overline{d_{k_2}} &=\frac{1}{4}
    \left( (a_{k_1}+b_{k_1}) + i(a_{k_1}-b_{k_1}) \right)
    \left( (a_{k_2}+b_{k_2}) - i(a_{k_2}-b_{k_2}) \right) \\
    &=
    \frac{1}{2}
    \left( a_{k_1} - i b_{k_1}\right)
    \left( a_{k_2} + i b_{k_2}\right).
    \end{aligned}
  \]
  Now let, as in the statement, $\mathcal{H}$ be the Toeplitz matrix 
  \[
    \mathcal{H}_{k_1,k_2} = \frac{1}{2\pi} 
    \int_{0}^{2\pi}
    e^{i \vartheta (k_1-k_2)}
    \mathrm{GMC}_{\theta}(\dif \vartheta),
    \quad 0 \leq k_1,k_2 \leq \ell,
  \]
  which is positive semidefinite. Then define the random vector 
  \[
    (\widehat{c}_0, 
    \widehat{c}_1,
    \dots, 
    \widehat{c}_\ell)^{\top}
    \coloneqq 
    \sqrt{\mathcal{H}}( 
      \mathcal{Z}_0, 
      \mathcal{Z}_1, \dots,
      \mathcal{Z}_\ell  
      )^{\top}.
  \]
  Now if we use the complex normal structure, and the computations derived above
  \[
    \begin{aligned}
      &\Exp 
      \exp 
      \left( 
        i \sum_{k=0}^\ell (a_k \Re \widehat{c}_{k} + b_k \Im \widehat{c}_{k})
      \right) \\
      &=
      \Exp 
      \exp 
      \left(
        i 
        \sum_{k=0}^\ell \frac{a_k - ib_{k}}{2} \widehat{c}_{k} 
        + \frac{a_k + ib_{k}}{2} \overline{\widehat{c}_{k}}
        \right)\\
      &= 
      \Exp 
      \exp 
      \left(
        \Exp 
        \left[ 
        \left( 
        i 
        \sum_{k=0}^\ell \frac{a_k - ib_{k}}{2} \widehat{c}_{k} 
        + \frac{a_k + ib_{k}}{2} \overline{\widehat{c}_{k}}
        \right)^2 
        \middle\vert 
        \mathrm{GMC}_{\theta}
        \right]
        \right)\\
        &= 
        \Exp 
        \exp 
        \left(
          -
          \sum_{k_1,k_2}^\ell 
          \frac{
            (a_{k_1} - ib_{k_1})
            (a_{k_2} + ib_{k_2})
          }{4} \mathcal{H}_{k_1,k_2}
        \right)\\
        &= 
        \Exp 
        \exp 
        \left(
          -\mtp/2
        \right).
    \end{aligned}
  \]
  This completes the proof.
\end{proof}

\bibliographystyle{alpha}
\bibliography{convergence-zero-secular}
\end{document}